\title{A Nystr\"{o}m method for a boundary integral equation related to the Dirichlet problem on domains with corners}
\author{Luisa Fermo \\
		{\small\it Department of Mathematics and Computer Science} \\
		{\small\it University of Cagliari} \\
		{\small\it Viale Merello 92, 09123 Cagliari, Italy} \\[  5mm]
		Concetta Laurita \\
		{\small\it Department of Mathematics, Computer Science and Economics} \\
		{\small\it University of Basilicata} \\		
		{\small\it Via dell'Ateneo Lucano 10, 85100 Potenza, Italy}
	   }
\date{}
\newcommand{\C}{\mathcal{C}}
\newcommand{\RR}{\mathbb{R}}
\newcommand{\NN}{\mathbb{N}}
\newcommand{\PP}{\mathbb{P}}
\def\C{{\mathcal{C}}}
\theoremstyle{plain}\newtheorem{theorem}{Theorem}[section]
\theoremstyle{remark}
\theoremstyle{plain}\newtheorem{lemma}[theorem]{Lemma}
\theoremstyle{plain}\newtheorem{corollary}[theorem]{Corollary}
\theoremstyle{definition}
\begin{document}
\maketitle

\begin{abstract}
The authors consider the interior Dirichlet problem for Laplace's equation on planar domains with corners. In order to approximate the solution of the corresponding double layer boundary integral equation, they propose a numerical method of Nystr\"{o}m type, based on a Lobatto quadrature rule.
The convergence and stability of the method are proved and some numerical tests are included.

\medskip

\noindent{\bf Keywords:} Boundary integral equations, Dirichlet problem,  Nystr\"om method

\medskip

\noindent{\bf Mathematics Subject Classification:} 65R20
\end{abstract}

\section{Introduction}
Let $D$ be a simply connected bounded region in the plane and let its boundary $\Sigma$ be a simple closed piecewise smooth curve.
Let us assume $\Sigma$ at least twice continuously differentiable, with the exception of corners at some points $P_1,\ldots,P_n$.

We consider the interior Dirichlet problem for Laplace's equation
\begin{equation}\label{Dirichlet}
\left\{
  \begin{array}{ll}
    \Delta u(P)=0, & \hbox{$P \in D$} \\
    u(P)=g(P), & \hbox{$P \in \Sigma$}
  \end{array}
\right.
\end{equation}
where  $g$ is a given sufficiently smooth boundary function on $\Sigma$.

Using a double layer potential representation for the solution of (\ref{Dirichlet})
\begin{equation}\label{double}
u(A)=\int_{\Sigma} \psi(Q) \frac{\partial}{\partial \mathbf{n}_Q}[\log{|A-Q|}] d \Sigma_{Q}, \quad A \in D,
\end{equation}
with $\mathbf{n}_Q$ the inner normal to $\Sigma$ at $Q$, leads to the  BIE of the second kind (see, for instance, \cite{A})
\begin{equation}\label{equ}
(-2\pi+\Omega(P)) \psi(P) +\int_\Sigma \psi(Q) \frac{\partial}{\partial\mathbf{n}_Q} [\log{|P-Q|}] d \Sigma_{Q}= g(P), \quad P \in \Sigma,
\end{equation}
whose  unknown is  the so-called double layer density function $\psi(P)$  and where
$\Omega(P)$ denotes the interior angle to $\Sigma$ at $P$.
Note that $\Omega(P)=\pi$ if $\Sigma$ is smooth in $P$, while in the ``corner points'' we assume
\[0<\Omega(P)<2\pi.\]

Defining the operator
\begin{equation} \label{opK}
(K \psi)(P)=(-\pi+\Omega(P))\psi(P)+\int_\Sigma \psi(Q) \frac{\partial}{\partial\mathbf{n}_Q} [\log{|P-Q|}] d \Sigma_{Q}, \quad P \in \Sigma
\end{equation}
for $\psi \in C(\Sigma)$, one can rewrite equation (\ref{equ}) in the following more compact operator form
\begin{equation} \label{equop}
(-\pi+K)\psi=g.
\end{equation}
It is well known (see, for instance,  \cite{A}) that the operator ${ K}$ is a bounded map from $C(\Sigma)$ into $C(\Sigma)$ and is compact
when $\Sigma$ is a smooth curve. On the other hand, $K$ is no longer compact when the boundary $\Sigma$ is only piecewise smooth, due to the presence of
the corner points. In addition, the double layer density function may have a singularity in corners of the type
\[d^{\beta}, \quad \beta=\frac{\pi}{\pi+|\pi -\phi|},\]
with $d$  the distance from the corner and $\phi$ the interior angle at the corner.

The  most po\-pu\-lar methods to solve such a problem, for instance collocation, Galerkin and Nystr\"om methods, are based on piecewise polynomial approximations with graded meshes
(see, for example, \cite{Chandler,Jeon,Kress,Rathsfeld} and the references therein).
The use of this type of approximation allows, by grading properly the mesh, one  to obtain arbitrarily high orders of convergence.
Nevertheless, the final linear systems one has to solve become ill-conditioned as the local degree increases.

A different approach is proposed in \cite{MS} where the authors describe a method based on a global ap\-pro\-xi\-mation of the unknown function.
By re\-pre\-senting the solution of  the Dirichlet problem in the form of a single layer potential, they reduce it to solving a system of integral
equations of the first kind. Then, after introducing some smoothing changes of variable, they apply a collocation method  ap\-pro\-xi\-mating the
unknown density by means of polynomials over each smooth section of the boundary.
The numerical results  improve as the regularizing  parameter increases.
Unfortunately, in \cite{MS}, the stability and convergence of the described numerical procedure are not theoretically proved and error estimates are not given.

More recently an extensive literature on efficient numerical methods to discretize boundary integral equations connected with elliptic problems
on domains with corners has been developed (see \cite{Bremer2012_1,Bremer2012_2,BremerRokhlin,BremerRokhlinSammis,BrunoOvalTurc,Helsing2011,HelsingOjala}
and the references therein). \newline
In \cite{BrunoOvalTurc} a  scheme for the numerical solution of the Neumann problem for the Laplace equation is introduced.
The solutions of the standard corresponding integral equations can be unbounded at the corners. In order to achieve high accuracy in the computation
 of the  Nystr\"om solution, the authors propose the analytical subtraction of singularities and a special treatment of nearly non-integrable
integrands in such a way as to avoid cancellation errors. \newline
 A scheme dubbed ``recursive inverse preconditioning'' has been introduced in \cite{HelsingOjala} and further developed in \cite{Helsing2011}. It is a technique which allows to overcome the negative effects of
the ill-conditioning of matrices arising from Nystr\"om discretization of singular integral equations on non-smooth domains. \newline
In \cite{Bremer2012_1,Bremer2012_2} the author presents a Nystr\"om method based on discretization techniques described in  \cite{BremerRokhlin}
and \cite{BremerRokhlinSammis}. The advantage of this method is that, in addition to pro\-du\-cing well conditioned linear systems,
thanks to a compression scheme, the approach reduces the number of equations which becomes excessively large in the presence of large-scale domains
with corners. Nevertheless, in these papers the mathematical analysis of stability and convergence of the proposed procedures is not carried out but are
only demonstrated through several numerical examples which show high computational accuracy.

Here we propose a numerical method, based on global approximations, that directly produces  well conditioned systems without resorting to
preconditioning techniques.
By following already known ideas (see \cite{A} and the references therein), we  decompose, in a suitable way, the piecewise smooth boundary $\Sigma$
into sections and convert the boundary integral equation (\ref{equop}) into an equivalent system of integral equations of the second kind.
Then a Nystr\"om  method using a global ap\-pro\-xi\-mation over each smooth section of the boundary is applied for its numerical solution.
The method applies the Lobatto quadrature rule in order to evaluate the integrals involved in the system.
In any case, a slight modification of the corresponding discrete operator around the corners is needed in the approximating system to achieve stability.
Finally, the solution of the linear system to which the method leads is used to calculate a discrete approximation of the  double layer potential
(\ref{double}).
We are able to prove theoretically the stability  and convergence of the proposed procedure.
 Moreover, we also show that
the linear systems  arising from the discretization of the system of boundary integral equations are well conditioned.
Neverthless, for domains with a large number of corner points the procedure involves high computational costs as the dimension of the linear system
increases.

The contents of the paper are as follows.
Section \ref{Section2} provides preliminary definitions, notations and results.
Section \ref{Section3} is devoted to describing the numerical procedure and to establishing the main theorems about its stability and convergence.
Section \ref{Section4} contains the proofs of the theoretical results and, finally, Section \ref{Section5} concludes the paper
by presenting some numerical tests.

\section{Preliminaries} \label{Section2}
\subsection{Function spaces}
Let  $L^p([0,1])$ be the space of all measurable functions $f$ on $[0,1]$ such that
\[\|f\|_p=\left(\int_{0}^{1} |f(x)|^p dx \right)^{\frac 1 p}< +\infty, \quad 1 \leq p <+\infty. \]
With $w(x)\in L^p([0,1])$, a Jacobi weight on $[0,1]$, we set $f \in L_w^p([0,1])$ if and only if $fw \in L^p([0,1])$, $1 \leq p <+\infty$
and equip the space $L_w^p([0,1])$ with the norm
\[\|fw\|_p=\left(\int_{0}^{1} |f(x)w(x)|^p dx \right)^{\frac 1 p}, \quad 1 \leq p <+\infty.\]
Moreover, we consider the  Sobolev-type subspace $W^p_r(w)$ of $L_w^p([0,1])$ defined as follows
\[W^p_r(w)=\{f \in L_w^p([0,1]) \ \mid \ \|f\|_{W^p_r(w)}= \|fw\|_p +\|f^{(r)}\varphi^rw\|_p<\infty\},\]
where $r$ is a positive integer and $\varphi(x)= \sqrt{x(1-x)}$.

Finally, for any $r \in \NN_{0} \cup \{\infty\}$, let us consider the following direct product
\[C^r([0,1])^{^m}=\underbrace{C^r([0,1]) \times C^r([0,1]) \times \ldots \times C^r([0,1])}_m,\]
which is complete equipped with the norm
\begin{equation} \label{normdirectproduct}
\|(f_1,f_2,\ldots,f_m)\|_{\infty} =\max_{i=1,2,\ldots,m} \|f_i\|_\infty.
\end{equation}

\subsection{The Lobatto quadrature rule}
In this subsection we give some remarks on the well-known Lobatto quadrature rule (see, for instance, \cite[p. 104]{DR}), since we are
going to use a method of Nystr\"{o}m type,  based on this integration formula, for the numerical solution of our system of integral equations.
Let us premise some notations.

In the sequel $\C$ denotes a positive constant which may assume different values in different formulas. We write $\C =\C(a,b,...)$ to say that
 $\C$ is dependent on the parameters $a,b,....$ and $\C \neq \C(a,b,...)$ to say that $\C$ is independent of them.
Furthermore, if $A,B > 0$ are quantities depending on some parameters, we will write $A \sim B$, if there exists a positive constant $\C$ independent of the parameters of $A$ and $B$ such that
\[ \frac{1}{\C} \leq \frac{A}{B} \leq \C.\]

Let $w\in L^p([0,1])$ be a Jacobi weight on $[0,1]$. Denoting by $\PP_m$ the set of all algebraic polynomials of degree at most $m$,
for functions $f\in L_w^p$,  we define the  weighted error of best polynomial approximation as
\[E_m(f)_{w,p}=\inf_{P_m \in \PP_m} \left\|\left(f-P_m\right)w\right\|_p.\]
Moreover, for simplicity, in the case when $w(x)\equiv 1$ we set $W^p_r=W^p_r(w)$ and $E_m(f)_{p}=E_m(f)_{w,p}$ . \newline
Now, let $\{p_m(v^{1,1})\}_m$, $p_m(v^{1,1}) \in \PP_m$, be the
sequence of  polynomials which are orthogonal on $[0,1]$ with respect to the Jacobi weight
$v^{1,1}(x)=x(1-x)$. The Lobatto quadrature rule  over the interval $[0,1]$ is given by
\begin{equation}\label{lobatto}
\int_0^1 f(x) dx= \sum_{k=0}^{m+1}  \lambda_k f(x_k) + e_m(f)
\end{equation}
with the nodes $x_0=0$,  $x_1<x_2<...<x_m$ zeros of $p_m(v^{1,1})$, $x_{m+1}=1$
and the coefficients
\begin{equation} \label{Lobattoweights1}
\lambda_0=\lambda_{m+1}=\frac{1}{(m+1)(m+2)},
\end{equation}
\begin{equation} \label{Lobattoweights2}
\lambda_k=\int_0^1\frac{l_k(x)v^{1,1}(x)}{v^{1,1}(x_k)}dx, \quad k=1,\ldots,m,
\end{equation}
where $l_k(x)$ is the $k$-th fundamental Lagrange polynomial based on the points $x_1,\ldots,x_m$.
Finally, $e_m(f)$ in (\ref{lobatto}) denotes the remainder term.

The following results give an error estimate for the Lobatto rule (\ref{lobatto}) in the case when $f \in W^1_r$, $r \geq 1$.
\begin{theorem}\label{errorequad}
For all $f \in W^1_1$ we have
\begin{equation} \label{errorequadest}
|e_m(f)| \leq \frac{\C}{m} E_{2m}(f')_{\varphi,1}
\end{equation}
where $\varphi(x)=\sqrt{x(1-x)}$ and $\C \neq \C(m,f)$.
\end{theorem}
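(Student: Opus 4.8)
The plan is to reduce the Lobatto rule to a Gauss--Jacobi rule for the weight $v^{1,1}$, for which error bounds in terms of best weighted polynomial approximation are classical, and then to integrate by parts to pass from $f$ to $f'$. Concretely, I would first observe that the interior nodes $x_1,\dots,x_m$ of \eqref{lobatto} are exactly the zeros of $p_m(v^{1,1})$, so the sum $\sum_{k=1}^m \lambda_k g(x_k)$ with the weights \eqref{Lobattoweights2} is precisely the $m$-point Gauss--Jacobi quadrature rule $G_m(g)$ for $\int_0^1 g(x)\,v^{1,1}(x)\,dx$. Hence $\sum_{k=1}^m \lambda_k g(x_k) = \int_0^1 g(x)v^{1,1}(x)\,dx$ whenever $g\in\PP_{2m-1}$, and in general the Gauss error $\int_0^1 g v^{1,1} - G_m(g)$ is controlled, for $g\in L^1_{v^{1,1}}$, by $\C\, E_{2m-1}(g)_{v^{1,1},1}$ (a standard Gauss-rule estimate; one subtracts the best approximating polynomial $P^*\in\PP_{2m-1}$ and uses exactness plus positivity of the Christoffel numbers $\lambda_k>0$).

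Next I would rewrite $\int_0^1 f(x)\,dx$ so that the integrand carries a factor $v^{1,1}(x)=x(1-x)$, which vanishes at the endpoints and thus absorbs the endpoint nodes $x_0=0$, $x_{m+1}=1$. The natural device is integration by parts: writing $\int_0^1 f(x)\,dx = [F(x)]_0^1$ is not directly useful, so instead I would integrate $f'$ against a primitive of $v^{1,1}$. Let $H(x)$ be the polynomial of degree $3$ with $H'(x)=v^{1,1}(x)=x-x^2$, normalized so that $H(0)=0$ and $H(1)=0$ is \emph{not} quite possible (since $\int_0^1 v^{1,1}>0$); instead choose $H$ with $H(0)=0$, so $H(1)=\int_0^1 v^{1,1}(t)\,dt = 1/6$. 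Then integration by parts gives
\begin{equation*}
\int_0^1 f(x)v^{1,1}(x)\,dx = f(1)H(1) - \int_0^1 f'(x)H(x)\,dx = \tfrac16 f(1) - \int_0^1 f'(x)H(x)\,dx,
\end{equation*}
which is the wrong left-hand side; the correct route is to apply integration by parts to the \emph{Lobatto} error functional directly. The clean statement I would aim for: the Lobatto rule for $\int_0^1 f$ with these nodes/weights is exact on $\PP_{2m+1}$ (this is the classical degree of exactness of the $(m+2)$-point Lobatto rule), and its error functional, applied to $f$, equals (up to sign and a harmless polynomial factor) the Gauss--Jacobi error functional applied to $f'$, because differentiating a polynomial of degree $\le 2m+1$ lands in $\PP_{2m}$ and the two endpoint evaluations exactly account for the boundary terms.

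Carrying this out, the estimate becomes $|e_m(f)| \le \C\,\big|\text{(Gauss error for }v^{1,1}\text{ applied to }f')\big|$, and applying the Gauss bound with parameter $2m$ (since $f'$ is tested against a degree-$2m$ exactness) yields $|e_m(f)| \le \C\, E_{2m}(f')_{v^{1,1},1}$. Finally, I would convert the weight $v^{1,1}=\varphi^2$ to $\varphi$: since $E_{2m}(f')_{\varphi^2,1} = \|(f'-P^*)\varphi\cdot\varphi\|_1 \le \|\varphi\|_\infty\,\|(f'-P^*)\varphi\|_1$ and $\|\varphi\|_\infty \le 1/2$ on $[0,1]$, one gets $E_{2m}(f')_{\varphi^2,1}\le \C\,E_{2m}(f')_{\varphi,1}$, and the extra factor $1/m$ in \eqref{errorequadest} comes from the two endpoint weights $\lambda_0=\lambda_{m+1}=\frac{1}{(m+1)(m+2)}\sim m^{-2}$ combined with a factor $m$ gained in the integration-by-parts bookkeeping (the primitive $H$ contributes a factor comparable to $1/m$ relative to the natural scaling, or equivalently one exploits that $\|H\|_\infty$ on each subinterval between consecutive Gauss nodes is $O(1/m)$ times the local weight mass).

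The main obstacle will be getting the exact bookkeeping in the integration-by-parts step so that (i) the endpoint terms cancel against $\lambda_0 f(0)+\lambda_{m+1}f(1)$ rather than merely being "small", and (ii) the power of $m$ comes out as $1/m$ and not $1/m^2$ or $O(1)$. This is delicate because it requires knowing both the precise degree of exactness of the Lobatto rule and a uniform (in $m$) comparison between the Lobatto weights $\lambda_k$ and the Gauss--Jacobi Christoffel numbers for $v^{1,1}$; one must also be careful that $f\in W^1_1$ only guarantees $f'\varphi\in L^1$, so every manipulation has to stay within $L^1_\varphi$ and use the weighted best-approximation error, never pointwise bounds on $f'$. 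I would handle this by first proving the identity relating $e_m(f)$ to the Gauss error of $f'$ for polynomials and all of $W^1_1$ by a density/continuity argument, then invoking the standard weighted Gauss estimate as a black box.
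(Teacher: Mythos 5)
Your proposal has a genuine gap at its central step. The claimed identity --- that the Lobatto error functional applied to $f$ ``equals (up to sign and a harmless polynomial factor)'' the Gauss--Jacobi error functional for $v^{1,1}$ applied to $f'$ --- is asserted, never established, and you yourself flag it as the main obstacle. There is no such identity to be had: the functional $g\mapsto e_m(F)$ (with $F'=g$) annihilates $\PP_{2m}$ while the $m$-point Gauss error annihilates $\PP_{2m-1}$, and two functionals that kill the same polynomials need not coincide on anything else. The actual algebraic relation between the interior Lobatto sum and the Gauss rule is $\sum_{k=1}^m \lambda_k f(x_k) = \sum_{k=1}^m \mu_k\, f(x_k)/v^{1,1}(x_k)$, i.e.\ the Gauss rule applied to $f/v^{1,1}$ --- a \emph{division} by a weight vanishing at the endpoints, not a differentiation --- and this is precisely why the endpoint weights $\lambda_0,\lambda_{m+1}$ must be handled separately rather than absorbed by a primitive of $v^{1,1}$. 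Your own integration-by-parts attempt produces the wrong left-hand side, as you note, and the replacement argument is only a heuristic; in particular the factor $1/m$ is explained by bookkeeping that is never carried out.

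The second problem is that the ``standard Gauss-rule estimate'' $\bigl|\int_0^1 g\,v^{1,1}-G_m(g)\bigr|\le \C\,E_{2m-1}(g)_{v^{1,1},1}$ for $g\in L^1_{v^{1,1}}$ is not available as a black box: point evaluations $g(x_k)$ are not controlled by $\|g\,v^{1,1}\|_1$, so after subtracting the best polynomial one is still left with $\sum_k \mu_k\,|g(x_k)-P(x_k)|$, and bounding that sum by integral norms is exactly the Marcinkiewicz-type inequality which is the real analytic content of the theorem. The paper proves precisely this missing piece, namely
$\sum_{k=0}^{m+1}\lambda_k|f(x_k)|\le \C\bigl(\|f\|_1+\tfrac1m\|f'\varphi\|_1\bigr)$,
using $\lambda_k\sim\Delta x_k\sim\varphi(x_k)/m$ together with the local inequality $(b-a)|f(a)|\le\int_a^b|f|+(b-a)\int_a^b|f'|$ on each subinterval; it then combines exactness on $\PP_{2m+1}$ with a Bernstein-type inequality and Favard's inequality to extract the $1/m\,E_{2m}(f')_{\varphi,1}$ bound. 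To repair your argument you would have to prove an inequality of exactly this type for the Christoffel numbers of $v^{1,1}$, at which point the detour through the Gauss error of $f'$ buys nothing.
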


From the previous theorem and taking into account the Favard inequality (see \cite{DT})
\begin{equation}\label{Favard}
E_m(f)_{w,p} \leq \frac \C {m^r}\, E_{m-r}(f^{(r)})_{\varphi^rw,p}, \quad  \C \neq \C(m,f),
\end{equation}
holding true for each function $f  \in W^p_r(w)$,
we can immediately deduce the following
\begin{corollary}\label{corquad}
For all $f \in W^1_r$, $r \geq 1$, we have
\begin{equation}\label{errorequad2}
|e_m(f)| \leq \frac{\C}{m^r} E_{2m+1-r}(f^{(r)})_{\varphi^r,1}
\end{equation}
where $\varphi(x)=\sqrt{x(1-x)}$ and $\C \neq \C(m,f)$.
\end{corollary}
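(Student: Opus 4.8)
The plan is to obtain the corollary by composing Theorem~\ref{errorequad} with the Favard inequality~(\ref{Favard}). Since $W^1_r\subseteq W^1_1$ for every $r\geq 1$, Theorem~\ref{errorequad} applies to $f$ and gives
\[
|e_m(f)|\leq \frac{\C}{m}\,E_{2m}(f')_{\varphi,1}.
\]
It then remains only to estimate the weighted best–approximation error $E_{2m}(f')_{\varphi,1}$, and here I would invoke~(\ref{Favard}) applied to the function $f'$, with weight $w=\varphi$, exponent $p=1$, order of differentiation $r-1$, and degree $2m$ in place of $m$; recalling $(f')^{(r-1)}=f^{(r)}$ and $\varphi^{r-1}\varphi=\varphi^{r}$ this produces
\[
E_{2m}(f')_{\varphi,1}\leq \frac{\C}{(2m)^{r-1}}\,E_{2m-(r-1)}\bigl(f^{(r)}\bigr)_{\varphi^{r},1}
=\frac{\C}{(2m)^{r-1}}\,E_{2m+1-r}\bigl(f^{(r)}\bigr)_{\varphi^{r},1}.
\]
Multiplying the two displayed inequalities and using $(2m)^{r-1}=2^{r-1}m^{r-1}\sim m^{r-1}$ (the proportionality factor depending only on $r$, hence harmless) yields exactly~(\ref{errorequad2}). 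For $r=1$ the Favard step is vacuous and the estimate collapses back to~(\ref{errorequadest}), a useful consistency check.

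The one point that genuinely needs verification is that~(\ref{Favard}) is legitimately applicable to $f'$, i.e.\ that $f'\in W^1_{r-1}(\varphi)$ whenever $f\in W^1_r$. By definition this amounts to $f'\varphi\in L^1([0,1])$ together with $(f')^{(r-1)}\varphi^{r-1}\cdot\varphi=f^{(r)}\varphi^{r}\in L^1([0,1])$; the second membership is precisely part of the hypothesis $f\in W^1_r$, while the first one (which is also what secures the inclusion $W^1_r\subseteq W^1_1$ used above) follows from the standard control of intermediate derivatives in these Sobolev-type spaces, namely $\|f^{(k)}\varphi^{k}\|_1\leq \C\,\|f\|_{W^1_r}$ for $0\leq k\leq r$, taken with $k=1$; in fact this also gives $\|f'\|_{W^1_{r-1}(\varphi)}\leq \C\,\|f\|_{W^1_r}$.

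I expect this intermediate–derivative embedding to be the only place requiring more than bookkeeping; once it is available, the argument is just the two–line chain of inequalities above, with every $m$-independent factor absorbed into the generic constant $\C$, which — consistently with the statement — is permitted to depend on $r$.
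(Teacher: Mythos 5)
Your argument is correct and is exactly the route the paper takes: the paper derives the corollary by combining Theorem~\ref{errorequad} with the Favard inequality~(\ref{Favard}) applied to $f'$ with weight $\varphi$ and order $r-1$, absorbing the $r$-dependent constants into $\C$. Your additional remark verifying that $f'\in W^1_{r-1}(\varphi)$ via the control of intermediate derivatives is a point the paper glosses over, so your write-up is, if anything, slightly more careful than the original.
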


\section{The method} \label{Section3}
In this section we are going to propose a method to approximate  the solution of the boundary integral equation (\ref{equop}).
In order to simplify the presentation,  we shall consider the case where the boundary $\Sigma$ has only one corner at a point $P_0$ with an interior
angle $\phi=(1-\chi) \pi$, $-1<\chi<1$, $\chi \neq 0$. The extension to boundary curves with more than one corner is straightforward.

As recalled in the introduction, in the case under consideration the operator $K$ in (\ref{opK}) is not compact, but
the following splitting of $K$ is possible (\cite{AdH,ChandlerGraham,Jeon})
\[
K=\hat{L}+M,
\]
with $M$ a compact operator from $C(\Sigma)$ into $C(\Sigma)$ and $\hat{L}$ essentially the so called ``wedge operator'', i.e. the operator
$K$ defined on the wedge having vertex at $P_0$ and arms tangent to those of the boundary $\Sigma$ in the neighborhood of the corner point.
The operator $\hat{L}$, which is not compact, satisfies $\|\hat{L}\|<\pi$. Hence, in the decomposition $-\pi+\hat{L}+M$ of  $-\pi+K$, the operator
$-\pi+\hat{L}$ has a bounded inverse by the Neumann series.
Therefore, if  $-\pi+\hat{L}+M$ is injective, the inverse operator $(-\pi+\hat{L}+M)^{-1}:C(\Sigma) \rightarrow C(\Sigma)$ exists and is bounded.
Nevertheless, we don't apply the Nystr\"om method directly to the initial double layer potential equation
\[(-\pi+\hat{L}+M)\psi=g,\]
since the kernel of the operator $M$  is bounded but could be discontinuous at the corner point (see \cite{AdH}) and this
would make more difficult the theoretical analysis of the stability and convergence of the numerical procedure.

The method we are going to propose consists of two basic steps. As a first step we decompose,  in a suitable way,  the curve into sections and reduce (\ref{equop}) to an e\-qui\-valent system of integral equations. The second step is to apply a numerical method of Nystr\"{o}m type, based on the Lobatto quadrature rule (\ref{lobatto}),
to compute the solution of such a system.

Begin by subdividing $\Sigma$ into the sections $\Sigma_1$, $\Sigma_2$ and $\Sigma_3$ defined as follows.
By proceeding in the counterclockwise direction, let $\Sigma_1$ and $\Sigma_2$ be  two
sufficiently small smooth arcs of the boundary $\Sigma$ intersecting at the corner $P_0$. Moreover, we assume that their lengths are chosen so that
$\Sigma_1$ and $\Sigma_2$
essentially coincide with the segments tangent to the curve $\Sigma$ at $P_0$, in the sense
that
\begin{equation} \label{delta}
\max_{(x,y)\in \Sigma_i}|y-y_{t_i}| \leq \delta, \quad i=1,2,
\end{equation}
where $y_{t_i}$ denotes the ordinate of the point with abscissa $x$ on the segment tangent to $\Sigma_i$ at $P_0$ and $\delta$ is a very small positive number.
Finally, let   $\Sigma_3$  be the section connecting $\Sigma_1$ and $\Sigma_2$.

Then we can rewrite the boundary integral equation (\ref{equ}) as the following system of $3$ boundary integral equations
\begin{eqnarray}
\label{sist}
(-2 \pi + \Omega(P)) \psi_i(P)+ \sum_{j=1}^{3} \int_{\Sigma_j} \psi_j(Q) \frac{\partial}{\partial \mathbf{n}_Q} [\log{|P-Q|}] d \Sigma_{Q}&=&g_i(P), \\ & \quad & P \in \Sigma_i, \quad i=1,2,3 \nonumber
\end{eqnarray}
where $\psi_i$ and $g_i$ denote the restrictions of the functions $\psi \in C(\Sigma)$ and $g \in C(\Sigma)$ to the curve $\Sigma_i$, respectively.

In order to transform the above curvilinear 2D integrals into 1D integrals on the same reference interval, let us introduce a parametric representation $\sigma_i$ defined on the interval $[0,1]$ for each arc $\Sigma_i$
\begin{equation}\label{trasf}
\sigma_i: s \in [0,1] \to (\xi_i(s), \eta_i(s)) \in \Sigma_i,
\end{equation}
with $\sigma_i \in C^{2}([0,1])$ and $|\sigma_i'(s)|\neq 0$ for each $0 \leq s \leq 1$ and $i=1,2,3$. Moreover,
without any loss of generality, we assume that $\sigma_1'(s)<0$, $\sigma_2'(s)>0$, $\sigma_3'(s)>0$, $\forall s \in [0,1]$, and $\sigma_1(0)=\sigma_2(0)=P_0$.
Then (\ref{sist}) can be rewritten as the following system of integral equations on the interval $[0,1]$
\begin{equation}  \label{sist2}
(-2 \pi +\bar{\Omega}_i(s))\bar{\psi}_i(s)+ \sum_{j=1}^{3} \int_0^1K^{i,j}(t,s)\bar{\psi}_j(t)dt= \bar{g}_i(s), \quad s\in[0,1], \quad i=1,2,3,
\end{equation}
where $\bar{\Omega}_i(s)=\Omega(\sigma_i(s))$, $\bar{\psi}_i(s)=\psi_i(\sigma_i(s))$, $\bar{g}_i(s)=g_i(\sigma_i(s))$
and
\begin{equation} \nonumber
K^{i,j}(t,s)=\left\{
    \begin{array}{ll}
      \displaystyle \frac{\eta'_j(t)[\xi_i(s)-\xi_j(t)]-\xi'_j(t)[\eta_i(s)-\eta_j(t)]}{[\xi_i(s)-\xi_j(t)]^2+[\eta_i(s)-\eta_j(t)]^2}, & \hbox{$i \neq j \quad \mathrm{or} \quad t \neq s$} \\ \\
    \displaystyle \frac 1 2 \frac{\eta'_j(t) \xi''_j(t)-\xi_j'(t) \eta''_j(t)}{[\xi'_j(t)]^2+[\eta'_j(t)]^2}, & \hbox{$i=j \quad \mathrm{and} \quad t=s$}
    \end{array}
  \right.
\end{equation}
for $t,s \in [0,1]$. Note that
\begin{equation} \label{angle}
\bar{\Omega}_1(s)=\bar{\Omega}_2(s)=\left\{
\begin{array}{ll}
\pi,  & \quad 0<s \leq 1\vspace{0.2cm}\\
(1-\chi)\pi,  & \quad s=0
\end{array}
\right.
\end{equation}
and $\bar{\Omega}_3(s)=\pi$ for all $s \in [0,1]$.

Now, \ let us introduce the following complete subspace of the product space $C([0,1])^{^3}$ equipped with the norm (\ref{normdirectproduct}),
$$\tilde{\mathcal X}=\left\{(f_1,f_2,f_3)^T \in C([0,1])^{^3}  \mid f_1(0)=f_{2}(0), \ f_2(1)=f_3(0), \ f_1(1)=f_3(1)\right\}$$
and the bijective map $\eta:C(\Sigma)\rightarrow \tilde{\mathcal X}$ defined as follows
$$\eta f=(\bar{f}_1,\bar{f}_2,\bar{f}_3), \quad \bar{f}_i(t)=f(\sigma_i(t)), \quad t \in [0,1].$$
By defining the following matrices of operators
\begin{equation}
{\mathcal I} =\left(
                           \begin{array}{lll}
                             I & 0 &  0 \\
                              0& I  & 0   \\
                             0 &   0 & I \\
                           \end{array}
                         \right), \quad \quad
{\mathcal K} =\left(
                           \begin{array}{lll}
                              (-\pi+\bar{\Omega}_1)I+\mathcal K^{1,1} &  \mathcal K^{1,2} &   \mathcal K^{1,3}\\
                              \mathcal K^{2,1}   & (-\pi+\bar{\Omega}_2)I+\mathcal K^{2,2} & \mathcal K^{2,3}\\
                              \mathcal K^{3,1} & \mathcal K^{3,2} & \mathcal K^{3,3}\\
                           \end{array}
                         \right),
\end{equation}
with $I$ the identity operator on the space $C([0,1])$ and
\[
(\mathcal K^{i,j} \rho)(s)=\int_{0}^1 K^{i,j}(t,s) \rho(t) dt, \quad \rho \in C([0,1]),
\]
the system (\ref{sist2}) can be rewritten, in a compact form, as follows
\begin{equation} \label{sistop}
(-\pi {\mathcal I}+ {\mathcal K}) \bar{\psi}=\bar{g},
\end{equation}
where
\begin{equation} \label{barpsi}
\bar{\psi}= \left(\bar{\psi}_1,\bar{\psi}_2,\bar{\psi}_3\right)^T \in \tilde{\mathcal X}, \quad \bar{g}= \left(\bar{g}_1,\bar{g}_2,\bar{g}_3\right)^T
\in \tilde{\mathcal X}.
\end{equation}
Let us observe that the operator $(-\pi {\mathcal I}+ {\mathcal K})^{-1}: \tilde{\mathcal X} \rightarrow \tilde{\mathcal X}$ exists and is bounded
since we have
\begin{equation} \label{opeta}
(-\pi {\mathcal I}+ {\mathcal K})=\eta(-\pi+K)\eta^{-1}.
\end{equation}
Moreover, let us note that the integral operators $\mathcal K^{i,j}$
are compact on the space $C([0,1])$, 
since their kernels are continuous on $[0,1]\times[0,1]$
(see, for instance, \cite{AdH,Kress}), except when $i,j\in\{1,2\}$  and $i \neq j$.
In fact, in such cases $\mathcal K^{i,j}$ takes the following form (see \cite{AdH,ChandlerGraham,Jeon})
\begin{equation}\label{Mellin}
(\mathcal K^{i,j} \rho)(s)=(\mathcal L \rho)(s)+(\mathcal M^{i,j}\rho)(s),
\end{equation}
where the integral operator $\mathcal L $ is defined as follows
\[
(\mathcal L \rho)(s)=\int_0^1 L (t,s) \rho(t) dt, \quad \rho \in C([0,1]),
\]
with the  Mellin--type kernel $L(t,s)$ given by
\[
L(t,s)=-\frac{s \sin {(\chi \pi)}}{s^2+2ts \cos {(\chi \pi)}+t^2},
\]
and
\[
(\mathcal M^{i,j}\rho)(s)=\int_0^1 M^{i,j}(t,s)\rho(t)dt, \quad \rho \in C([0,1]),
\]
with the kernel $ M^{i,j}(t,s)$ continuous on $[0,1]\times[0,1]$.



In order to carry out the theoretical analysis of
the stability and convergence of the numerical procedure we are going to
propose,  we rewrite (\ref{sistop}) as follows
\begin{equation}\label{sistop1}
(-\pi \mathcal I+\mathcal W+\mathcal S)\bar{\psi}=\bar{g}
\end{equation}
with
\begin{equation} \label{MatrixW}
{\mathcal W}=\left(
                           \begin{array}{lll}
                              (-\pi+\bar{\Omega}_1)I &  \mathcal L &   0\\
                               \mathcal  L   & (-\pi+\bar{\Omega}_2)I & 0\\
                              0 & 0 & 0\\
                           \end{array}
                         \right)
\end{equation}
and
\begin{equation} \label{MatrixS}
{\mathcal S}={\mathcal K}-{\mathcal W}
\end{equation}
and we introduce the following complete subspace of the product space $C([0,1])^{^3}$
\begin{equation} \label{space}
{\mathcal X}=\left\{(f_1,f_2,f_3)^T \in C([0,1])^{^3}  \mid f_1(0)=f_{2}(0)\right\}.
\end{equation}
Note that $\tilde{\mathcal X} \subset {\mathcal X}$.
Then we are able to prove the following result concerning the solvability of the system (\ref{sistop1}) in the spaces ${\mathcal X}$ and $\tilde{\mathcal X}$.

\begin{theorem}\label{theoremsolv}
Let $\mathrm{Ker}(-\pi \mathcal{I}+{\mathcal W}+{\mathcal S})=\{0\}$ in the Banach space ${\mathcal X}$. Then system $(\ref{sistop1})$ has a unique solution in ${\mathcal X}$ for each given right hand side $\bar{g} \in {\mathcal X}$. Moreover, if $\bar{g} \in \tilde{\mathcal X}$ then the solution $\bar{\psi}$ of $(\ref{sistop1})$ also belongs to $\tilde{\mathcal X}$.
\end{theorem}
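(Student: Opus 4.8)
The plan is to establish the result in two stages: first solvability in the larger space $\mathcal{X}$, then the invariance of $\tilde{\mathcal{X}}$ under the solution operator. For the first stage, I would argue that $-\pi\mathcal{I}+\mathcal{W}+\mathcal{S}$ is a Fredholm operator of index zero on $\mathcal{X}$, so that injectivity (the hypothesis $\mathrm{Ker}(-\pi\mathcal{I}+\mathcal{W}+\mathcal{S})=\{0\}$) forces surjectivity and hence bijectivity; boundedness of the inverse then follows from the open mapping theorem. To see the Fredholm property, I would split $-\pi\mathcal{I}+\mathcal{W}+\mathcal{S}=(-\pi\mathcal{I}+\mathcal{W})+\mathcal{S}$. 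By \eqref{MatrixS}, \eqref{Mellin} and the remarks following it, the matrix $\mathcal{S}=\mathcal{K}-\mathcal{W}$ has all entries made up of the compact operators $\mathcal{K}^{i,j}$ (for $i\neq j$ with $i,j\in\{1,2\}$ the Mellin part $\mathcal{L}$ has been removed, leaving the compact remainder $\mathcal{M}^{i,j}$), so $\mathcal{S}$ is compact on $C([0,1])^3$ and hence on the closed subspace $\mathcal{X}$. Thus it suffices to check that $-\pi\mathcal{I}+\mathcal{W}$ is Fredholm of index zero on $\mathcal{X}$.

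For that, I would use the structure of $\mathcal{W}$ in \eqref{MatrixW}: the third component decouples and there the operator is simply $-\pi I$, which is boundedly invertible on $C([0,1])$. The essential point is therefore the $2\times 2$ block acting on $(f_1,f_2)$ with the constraint $f_1(0)=f_2(0)$, namely
\[
\begin{pmatrix} -2\pi I + (\bar\Omega_1-\pi)I + \pi I & \mathcal{L} \\ \mathcal{L} & -2\pi I + (\bar\Omega_2-\pi)I + \pi I \end{pmatrix},
\]
i.e. $-\pi I + (\bar\Omega_i-\pi)I + \mathcal{L}$ on the diagonal. Since $\bar\Omega_i(s)=\pi$ for $s>0$ and differs from $\pi$ only at the single point $s=0$, the multiplication operator $(\bar\Omega_i-\pi)I$ is not compact, but this is exactly the wedge/Mellin situation: the block $-\pi\mathcal{I}+\mathcal{W}$ restricted to the relevant subspace is (a realisation of) $-\pi+\hat L$ up to the continuous parametrisation maps, and by the discussion in the text $\|\hat L\|<\pi$, so $-\pi+\hat L$ is invertible by the Neumann series. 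Concretely I would invoke that $-\pi\mathcal{I}+\mathcal{W}$ is unitarily/boundedly equivalent, via $\eta$ and the splitting $K=\hat L+M$, to $-\pi+\hat L$ composed with a bounded projection, and therefore has a bounded inverse on $\mathcal{X}$; in particular it is Fredholm of index zero. Adding the compact perturbation $\mathcal{S}$ preserves index zero, and the result of the first stage follows.

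For the second stage, suppose $\bar g\in\tilde{\mathcal{X}}$, so $\bar g$ additionally satisfies $\bar g_2(1)=\bar g_3(0)$ and $\bar g_1(1)=\bar g_3(1)$, and let $\bar\psi\in\mathcal{X}$ be the unique solution. I would show $\bar\psi\in\tilde{\mathcal{X}}$ by going back to the interpretation \eqref{opeta}: since $\bar g=\eta g$ for some $g\in C(\Sigma)$ (the map $\eta$ from the text is a bijection onto $\tilde{\mathcal{X}}$), and $-\pi\mathcal{I}+\mathcal{K}=\eta(-\pi+K)\eta^{-1}$, the element $\eta^{-1}\bar\psi$ — a priori only defined because $\bar\psi\in\mathcal{X}$, but note that $\bar\psi$ need not lie in $\tilde{\mathcal{X}}$ yet, so I must be careful here — solves $(-\pi+K)(\eta^{-1}\bar\psi)=g$ in $C(\Sigma)$ once continuity across $P_0$ is known. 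The cleanest route is: the equation \eqref{sistop1} reads $(-\pi\mathcal{I}+\mathcal{K})\bar\psi=\bar g$ with $\mathcal{K}$ as in \eqref{sistop}; rearrange each scalar equation to express $\bar\psi_i(s)$ for $s\in\{0,1\}$ in terms of integrals $\int_0^1 K^{i,j}(t,s)\bar\psi_j(t)\,dt$ and $\bar g_i(s)$. The kernels $K^{i,j}$ are, by the defining formulas, precisely the normal-derivative-of-log kernels evaluated at the geometric points $\sigma_i(s)$ and $\sigma_j(t)$; hence $\int_0^1 K^{i,j}(t,s)\bar\psi_j(t)\,dt$ depends on $s$ only through the point $\sigma_i(s)\in\Sigma$. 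Since $\sigma_1(0)=\sigma_2(0)=P_0$, $\sigma_2(1)=\sigma_3(0)$, $\sigma_1(1)=\sigma_3(1)$ are the same points of $\Sigma$, the sum $\sum_j\int_0^1 K^{i,j}(t,s)\bar\psi_j(t)\,dt$ takes equal values at the matching endpoints of different arcs, and likewise $\bar g_i$ takes equal values there because $\bar g\in\tilde{\mathcal{X}}$. Comparing the two equations at a shared point and using $\bar\Omega_1(0)=\bar\Omega_2(0)=(1-\chi)\pi$, $\bar\Omega_i(1)=\pi$, $\bar\Omega_3=\pi$, one deduces $\bar\psi_1(0)=\bar\psi_2(0)$ (already known), $\bar\psi_2(1)=\bar\psi_3(0)$ and $\bar\psi_1(1)=\bar\psi_3(1)$, i.e. $\bar\psi\in\tilde{\mathcal{X}}$.

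The main obstacle I anticipate is the first stage — rigorously justifying that $-\pi\mathcal{I}+\mathcal{W}$ is invertible (or at least Fredholm of index zero) on $\mathcal{X}$ despite the non-compact multiplication operators $(\bar\Omega_i-\pi)I$ and the non-compact Mellin operator $\mathcal{L}$. This is where the bound $\|\hat L\|<\pi$, the Neumann-series argument, and the precise identification of $\mathcal{W}$ with the wedge operator must be made watertight; the rest is bookkeeping about how the interface conditions propagate from the data to the solution through kernels that are evaluated at geometrically identified boundary points.
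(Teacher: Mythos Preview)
Your overall architecture---Neumann series for $-\pi\mathcal I+\mathcal W$, compactness of $\mathcal S$, Fredholm alternative, then a separate argument for $\tilde{\mathcal X}$---matches the paper's. The difference lies in how each stage is executed.

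\textbf{First stage.} The obstacle you flag is resolved in the paper much more directly than through any identification with the wedge operator on $\Sigma$. One simply checks that $\mathcal W$ maps $\mathcal X$ to $\mathcal X$ (the constraint $\rho_1(0)=\rho_2(0)$ is exactly what makes $(\bar\Omega_i-\pi)\rho_i+\mathcal L\rho_{3-i}$ continuous at $s=0$), and then estimates its norm: for $\|\rho\|_\infty\le 1$ one has
\[
\bigl|(-\pi+\bar\Omega_i(s))\rho_i(s)+(\mathcal L\rho_{3-i})(s)\bigr|
\le |{-}\pi+\bar\Omega_i(s)|+\int_0^1|L(t,s)|\,dt \le |\chi|\pi,
\]
uniformly in $s$, so $\|\mathcal W\|\le|\chi|\pi<\pi$. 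The geometric series then gives $(-\pi\mathcal I+\mathcal W)^{-1}$ bounded on $\mathcal X$, and the rest of your argument (compact $\mathcal S$, Fredholm alternative) goes through unchanged. Your detour via $\eta$ and $\hat L$ is not needed and, as you sensed, would be awkward to make precise on $\mathcal X$ rather than $\tilde{\mathcal X}$.

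\textbf{Second stage.} Here your route genuinely differs from the paper's. The paper argues: since $-\pi\mathcal I+\mathcal K=\eta(-\pi+K)\eta^{-1}$ is invertible on $\tilde{\mathcal X}$, for $\bar g\in\tilde{\mathcal X}$ there exists a solution $\bar\varphi\in\tilde{\mathcal X}\subset\mathcal X$, and uniqueness in $\mathcal X$ (just proved) forces $\bar\psi=\bar\varphi$. Your approach---evaluate the scalar equations at the matching endpoints, use that the integral terms depend only on the geometric point $\sigma_i(s)$, and cancel---is correct and more self-contained (it does not appeal to the solvability of $(-\pi+K)\psi=g$ on $C(\Sigma)$), at the cost of a short computation. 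Either is fine; the paper's is two lines, yours makes the mechanism explicit.
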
 \vspace{0.2cm}

Moreover, it is known that (see \cite{A,Chandler,Gris} and the re\-fe\-ren\-ces therein) even if the Dirichlet data $g$ is a smooth function,  the solution $\bar{\psi}=\left(\bar{\psi}_1,\bar{\psi}_2,\bar{\psi}_3\right)^T$ of (\ref{sistop1}) satisfies the following
smoothness properties:

\begin{itemize}
\item $\bar{\psi}_3$ is smooth; 
\item for $i \in \{1,2\}$
\begin{equation} \label{behavioursolution}
\bar{\psi}_i(t)=O\left(t^{\beta}\right), \quad  0 <t \leq 1, \quad \beta=\frac{1}{1+|\chi|},
\end{equation}
\begin{equation} \label{behavioursolution2}
\bar{\psi}_i^{(r)}(t) \leq \C t^{\beta-r}, \quad  0 <t \leq 1, \quad r=1,2,\ldots \quad.
\end{equation}
\end{itemize}
Therefore, there will almost always be an algebraic singularity in the first derivative of the double layer density function $\psi$ near the corner points, being $\frac{1}{2}<\beta<1$.

Now, in order to approximate  the solution of (\ref{sistop}) or, equivalently, of (\ref{sistop1}), we are going to  propose a numerical method of Nystr\"{o}m type  based on the Lobatto
quadrature rule (\ref{lobatto}). \newline
Then, for any fixed $m \in \NN$, denoting by $\lambda_h$ and $x_h$, $h=0,1,\ldots,m+1$, the coefficients and the nodes of  formula (\ref{lobatto}), respectively, we define the following finite rank operators
\begin{equation} \label{Lm}
(\mathcal L_m \rho)(s)= \sum_{h=0}^{m+1} \lambda_h L(x_h,s) \rho(x_h),
\end{equation}
approximating $\mathcal L$,  and
\begin{equation} \label{Mmij}
(\mathcal M_m^{i,j} \rho)(s)= \sum_{h=0}^{m+1} \lambda_h  M^{i,j}(x_h,s) \rho(x_h),
\end{equation}
\begin{equation} \label{Kmij}
(\mathcal K_m^{i,j} \rho)(s)= \sum_{h=0}^{m+1} \lambda_h  K^{i,j}(x_h,s) \rho(x_h),
\end{equation}
approximating the entries $\mathcal M^{i,j}$ and $\mathcal K^{i,j}$  of the matrix $\mathcal S$, respectively.

Now, any sequence of operators $\left\{\mathcal K_m^{i,j}\right\}_m$ is pointwise convergent to the operator $\mathcal K^{i,j}$ in the space $C([0,1])$, as well as $\left\{\mathcal M_m^{i,j}\rho\right\}_m$ tends to $\mathcal M^{i,j}\rho$ for any continuous function $\rho$ on $[0,1]$. On the other hand, it is possible to prove that, for a function $\rho \in C([0,1])$,  the sequence of functions $\left\{ \mathcal L_m\rho\right\}_m$ converges uniformly to $\mathcal L\rho$ in any interval of the type $\left[\displaystyle \frac{c}{m^{2-2\epsilon}},1\right]$, for some constant $c>0$ and arbitrarily small $\epsilon>0$ (see Lemma \ref{propLm}) and does not converge in $[0,1]$.

Let us introduce the following matrices of operators
\begin{equation} \label{Wm}
{\mathcal W_m}=\left(
                           \begin{array}{lll}
                              (-\pi+\bar{\Omega}_1)I &  \mathcal L_m &   0\\
                               \mathcal  L_m   & (-\pi+\bar{\Omega}_2)I & 0\\
                              0 & 0 & 0\\
                           \end{array}
                         \right)
\end{equation}
and
\begin{equation} \label{MatrixSm}
 {\mathcal S}_m =\left(
                           \begin{array}{lll}
                              \mathcal K_m^{1,1} &  \mathcal M_m^{1,2} &   \mathcal K_m^{1,3}\\
                              \mathcal M_m^{1,2}   & \mathcal K_m^{2,2} & \mathcal K_m^{2,3}\\
                              \mathcal K_m^{3,1} & \mathcal K_m^{3,2} & \mathcal K_m^{3,3}\\
                           \end{array}
                         \right).
\end{equation}

In order to establish stability and convergence results for the procedure we are going to propose, fol\-lo\-wing an idea in \cite{MasMon},
we need to slightly modify just ${\mathcal W}_m$.
More precisely, for a fixed a constant $c>0$ and an arbitrarily small $\epsilon>0$,   we define

\begin{equation} \label{opmodified}
(\tilde{\mathcal W}_m \varrho)(s)=\left\{
  \begin{array}{ll}
    ({\mathcal W}_m \varrho)(s), & \hbox{$\displaystyle  \frac{c}{m^{2-2\epsilon}} \leq s \leq 1$} \\ \\
    \displaystyle \frac{m^{2-2\epsilon}}{c}\left[s({\mathcal W}_m \varrho)\left(\frac{c}{m^{2-2\epsilon}}\right)
    +\left(\frac{c}{m^{2-2\epsilon}} -s\right)({\mathcal W} \varrho)(0)\right],
   & \hbox{$0 \leq s < \displaystyle  \frac{c}{m^{2-2\epsilon}}$}
  \end{array} \right.
\end{equation}
with $\varrho=(\varrho_1,\varrho_2,\varrho_3)^T\in C([0,1])^{^3}$. \newline
\newline
The operators $\tilde{\mathcal W}_m$ and ${\mathcal S}_m$ satisfy the following theorems.
\begin{theorem} \label{lemmaWm}
Let ${\mathcal W}$ and $\tilde{\mathcal W}_m$ be defined in $(\ref{MatrixW})$ and $(\ref{opmodified})$, respectively.
Then the operators $\tilde{\mathcal W}_m:{\mathcal X} \to {\mathcal X}$ are linear maps such that
\begin{equation} \label{limnormWm}
 \lim_{m \to \infty} \|\tilde{\mathcal W}_m \| < \pi
\end{equation}
 and
 \begin{equation} \label{convpointnormWm}
 \lim_{m \to \infty} \|(\tilde{\mathcal W}_m -{\mathcal W})\rho\|_{\infty}=0, \quad \forall \ \rho \in {\mathcal X}.
 \end{equation}
\end{theorem}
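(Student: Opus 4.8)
The plan is to prove the two assertions of Theorem~\ref{lemmaWm} separately, the linearity being immediate from the linearity of $\mathcal{W}_m$, $\mathcal{W}$ and the affine interpolation formula in $(\ref{opmodified})$. First I would check that $\tilde{\mathcal W}_m$ maps $\mathcal X$ into $\mathcal X$: since only the first two components of $\mathcal W_m$ and $\mathcal W$ are nontrivial and each of them acts as $(-\pi+\bar\Omega_i)I$ plus an $\mathcal L_m$- (resp.\ $\mathcal L$-) term on continuous functions, the outputs are continuous on $[\frac{c}{m^{2-2\epsilon}},1]$ and, on $[0,\frac{c}{m^{2-2\epsilon}}]$, are affine in $s$; the two pieces agree at $s=\frac{c}{m^{2-2\epsilon}}$ by construction, so each component of $\tilde{\mathcal W}_m\varrho$ is continuous on $[0,1]$. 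The constraint $f_1(0)=f_2(0)$ for $\tilde{\mathcal W}_m\varrho$ at $s=0$ reduces, via the interpolation formula, to the equality of the first two components of $(\mathcal W\varrho)(0)$, i.e.\ to $(-\pi+\bar\Omega_1(0))\varrho_1(0)+(\mathcal L\varrho_2)(0)=(-\pi+\bar\Omega_2(0))\varrho_2(0)+(\mathcal L\varrho_1)(0)$; since $\bar\Omega_1(0)=\bar\Omega_2(0)=(1-\chi)\pi$ and $(\mathcal L\varrho)(0)=0$ (the Mellin kernel $L(t,s)$ carries a factor $s$, so it vanishes at $s=0$), both sides equal $(-\pi+(1-\chi)\pi)\varrho_1(0)=(-\pi+(1-\chi)\pi)\varrho_2(0)$ using $\varrho_1(0)=\varrho_2(0)$. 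Hence $\tilde{\mathcal W}_m\varrho\in\mathcal X$.

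For the norm bound $(\ref{limnormWm})$ I would split according to the two cases in $(\ref{opmodified})$. On $[\frac{c}{m^{2-2\epsilon}},1]$ we have $\tilde{\mathcal W}_m\varrho=\mathcal W_m\varrho$, so it suffices to bound $\|\mathcal W_m\|$ restricted to that interval; on $[0,\frac{c}{m^{2-2\epsilon}})$ the value is a convex combination (with coefficients $\frac{m^{2-2\epsilon}}{c}s$ and $1-\frac{m^{2-2\epsilon}}{c}s$, both in $[0,1]$ and summing to $1$) of $(\mathcal W_m\varrho)(\frac{c}{m^{2-2\epsilon}})$ and $(\mathcal W\varrho)(0)$, hence is bounded by $\max\{\|\mathcal W_m\varrho\|_\infty,\|\mathcal W\varrho\|_\infty\}$. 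So overall $\|\tilde{\mathcal W}_m\|\le\max\{\|\mathcal W_m\|,\|\mathcal W\|\}$. Now $\|\mathcal W\|<\pi$ is exactly the statement recalled in the introduction that $\|\hat L\|<\pi$ (the matrix $\mathcal W$ is a parametrized realization of the wedge operator $\hat L$ combined with the $-\pi+\bar\Omega_i$ diagonal terms, whose norm is governed by the explicit Mellin kernel $L$). For $\|\mathcal W_m\|$ I would estimate the operator norm of $\mathcal L_m$ on $C([0,1])$, namely $\sup_s\sum_{h=0}^{m+1}\lambda_h|L(x_h,s)|$, using positivity of the Lobatto weights $\lambda_h$ and the fact that $\int_0^1|L(t,s)|\,dt$ is uniformly bounded by $|\hat L|$-type constants; the quadrature sum $\sum_h\lambda_h|L(x_h,s)|$ approximates $\int_0^1|L(t,s)|\,dt$ and one shows $\limsup_m\|\mathcal L_m\|$ does not exceed $\sup_s\int_0^1|L(t,s)|\,dt$, which together with the diagonal terms gives $\lim_m\|\mathcal W_m\|<\pi$. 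This is the step I expect to be the main obstacle: controlling $\sum_h\lambda_h|L(x_h,s)|$ uniformly in $s\in[\frac{c}{m^{2-2\epsilon}},1]$ is delicate because $L(\cdot,s)$ has a near-singularity at $t\approx s$ with $s$ possibly as small as $\frac{c}{m^{2-2\epsilon}}$, and one must exploit both the truncation of the domain and a careful analysis of how the Lobatto nodes $x_k$ (zeros of $p_m(v^{1,1})$) cluster near the endpoints, of order $k^2/m^2$, together with the matching weights $\lambda_k\sim x_k(1-x_k)/m$, to prevent blow-up; I would invoke (or adapt from) Lemma~\ref{propLm} and standard Gauss--Jacobi/Lobatto weight and node estimates here.

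For the pointwise-convergence statement $(\ref{convpointnormWm})$, fix $\rho=(\rho_1,\rho_2,\rho_3)^T\in\mathcal X$. The third component of $(\tilde{\mathcal W}_m-\mathcal W)\rho$ is identically zero, so only the first two matter, and by symmetry of the structure it is enough to treat one, say the first component $(-\pi+\bar\Omega_1)\rho_1+(\text{$\mathcal L_m$ or interpolated}\;\mathcal L_m)\rho_2$ versus $(-\pi+\bar\Omega_1)\rho_1+\mathcal L\rho_2$; the $(-\pi+\bar\Omega_1)\rho_1$ terms cancel identically (both $\mathcal W_m$ and $\mathcal W$ carry the same diagonal piece, unchanged by the modification since on $[0,\frac{c}{m^{2-2\epsilon}})$ the diagonal term of $\mathcal W_m$ evaluated anywhere equals $(-\pi+\pi)\rho_1 = 0$ there while $\mathcal W$ uses $\bar\Omega_1(0)$ at $s=0$ only — I would double-check this boundary bookkeeping), so it reduces to showing $\|(\tilde{\mathcal L}_m-\mathcal L)\rho_2\|_\infty\to 0$, where $\tilde{\mathcal L}_m$ denotes $\mathcal L_m$ modified by the same affine interpolation near $0$. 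On $[\frac{c}{m^{2-2\epsilon}},1]$ this is exactly the uniform convergence $\mathcal L_m\rho_2\to\mathcal L\rho_2$ on intervals $[\frac{c}{m^{2-2\epsilon}},1]$ guaranteed by Lemma~\ref{propLm}. On $[0,\frac{c}{m^{2-2\epsilon}})$, $(\tilde{\mathcal L}_m\rho_2)(s)$ is the convex combination of $(\mathcal L_m\rho_2)(\frac{c}{m^{2-2\epsilon}})$ and $(\mathcal L\rho_2)(0)=0$, while $(\mathcal L\rho_2)(s)\to(\mathcal L\rho_2)(0)=0$ uniformly on that shrinking interval because $\mathcal L\rho_2$ is continuous (indeed $\mathcal L\rho_2\in C([0,1])$) and vanishes at $0$; and $(\mathcal L_m\rho_2)(\frac{c}{m^{2-2\epsilon}})\to(\mathcal L\rho_2)(0)=0$ by the previous bullet applied at the left endpoint, so the convex combination tends to $0$ uniformly as well. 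Combining the two regions yields $(\ref{convpointnormWm})$. The only genuinely technical input is Lemma~\ref{propLm}, which I am permitted to assume; everything else is the bookkeeping of the piecewise definition and the convexity of the interpolation coefficients.
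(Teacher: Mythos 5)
Your treatment of the mapping property and of the norm bound (\ref{limnormWm}) follows essentially the paper's route: split at $s=c/m^{2-2\epsilon}$, use the convex-combination bound on the small interval, and on the large interval write $\sum_h\lambda_h L(x_h,s)=\int_0^1 L(t,s)\,dt-e_m(L(\cdot,s))$ (using positivity of the $\lambda_h$ and sign-definiteness of $L(\cdot,s)$) and control $e_m(L(\cdot,s))$ by Lemma \ref{propLm}, which gives $|e_m(L(\cdot,s))|\le \C\, r!\,m^{-r}s^{-r/2}\le \C\, r!\,m^{-r\epsilon}$ there. The part on (\ref{convpointnormWm}), however, contains two genuine problems. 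First, the claim that $\mathcal L\rho_2$ is continuous and vanishes at $0$ is false: while $(\mathcal L\rho_2)(0)=\int_0^1 L(t,0)\rho_2(t)\,dt=0$, the substitution $t=s\tau$ shows $\lim_{s\to0^+}(\mathcal L\rho_2)(s)=-\rho_2(0)\int_0^\infty\frac{\sin(\chi\pi)}{\tau^2+2\tau\cos(\chi\pi)+1}\,d\tau=-\chi\pi\rho_2(0)$, which is nonzero whenever $\rho_2(0)\neq0$. This jump is precisely what makes the Mellin operator non-compact, and it is compensated only by the matching jump of $\bar\Omega_1$ at $0$; your decomposition into a ``diagonal piece that cancels'' and an ``$\mathcal L$ piece that tends to $0$'' therefore fails on $[0,c/m^{2-2\epsilon})$ --- the boundary bookkeeping you flagged does not work out. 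The correct argument keeps the full combination and writes, for small $s$ and $\theta=sm^{2-2\epsilon}/c$, the difference $(\tilde{\mathcal W}_m-\mathcal W)\rho(s)$ as the sum of $\theta\,(\mathcal W_m-\mathcal W)\rho(c/m^{2-2\epsilon})$, $\theta\,[(\mathcal W\rho)(c/m^{2-2\epsilon})-(\mathcal W\rho)(0)]$ and $(\mathcal W\rho)(0)-(\mathcal W\rho)(s)$, the last two tending to zero by continuity of $\mathcal W\rho$ on $[0,1]$.

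Second, Lemma \ref{propLm} bounds only $e_m(L(\cdot,s))$, i.e.\ the quadrature error for the kernel alone; it does not yield $\mathcal L_m\rho_2\to\mathcal L\rho_2$ uniformly on $[c/m^{2-2\epsilon},1]$ for an arbitrary continuous $\rho_2$, because the error estimate for the Lobatto rule requires smoothness of the whole integrand $L(\cdot,s)\rho_2$. The paper closes this gap by proving the convergence only for polynomial triples $\rho\in\tilde{\PP}^{3}$ (where Leibniz' rule combined with the bound (\ref{estimate}) gives $|(\mathcal L_m-\mathcal L)p(s)|\le\C\,2^r r!\,m^{-r\epsilon}$), establishing that $\tilde{\PP}^{3}$ is dense in $\mathcal X$ (Lemma \ref{densesubspace}), and then invoking the Banach--Steinhaus theorem together with the uniform bound $\sup_m\|\tilde{\mathcal W}_m\|<\infty$ obtained in the first part. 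Your proposal is missing this density/uniform-boundedness step entirely; without it the passage from the kernel-only error estimate to general $\rho\in\mathcal X$ is not justified.
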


\begin{theorem}\label{opercompatti}
Let ${\mathcal S}$ and ${\mathcal S_m}$ be defined in $(\ref{MatrixS})$ and $(\ref{MatrixSm})$, respectively.
Then the operators ${\mathcal S}_m:{\mathcal X} \to {\mathcal X}$ are linear maps such that the set $\left\{{\mathcal S_m}\right\}_m$ is collectively compact and
\begin{equation} \label{convpointnormSm}
\displaystyle \lim_{m \to \infty} \|({\mathcal S}_m -{\mathcal S}){\rho}\|_{\infty}=0, \quad \forall \  \rho \in {\mathcal X}.
 \end{equation}
\end{theorem}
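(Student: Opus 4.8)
The plan is to recognize $\mathcal{S}_m$ as the Nyström discretization, via the Lobatto rule $(\ref{lobatto})$, of the matrix integral operator $\mathcal{S}=\mathcal{K}-\mathcal{W}$ in $(\ref{MatrixS})$, and then to run the classical collectively-compact-operator argument for Nyström approximations with continuous kernels. First I would record the structure. Every entry of $\mathcal{S}$ is an integral operator $\rho\mapsto\int_0^1 H^{i,j}(t,s)\,\rho(t)\,dt$ whose kernel $H^{i,j}$ is continuous on $[0,1]\times[0,1]$: for $(i,j)\notin\{(1,2),(2,1)\}$ one has $H^{i,j}=K^{i,j}$, continuous because $\sigma_i\in C^2([0,1])$ makes the diagonal value in the definition of $K^{i,j}$ a continuous function of its arguments; for $(i,j)\in\{(1,2),(2,1)\}$ one has $H^{i,j}=K^{i,j}-L=M^{i,j}$, continuous by $(\ref{Mellin})$. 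By $(\ref{MatrixSm})$, $\mathcal{S}_m$ is obtained from $\mathcal{S}$ by replacing each $\int_0^1(\cdot)\,dt$ with the quadrature sum $\sum_{h=0}^{m+1}\lambda_h(\cdot)(x_h)$; in particular $\mathcal{S}_m$ is linear, and $\mathcal{S}_m\rho\in C([0,1])^{3}$ since each $H^{i,j}(x_h,\cdot)$ is continuous in $s$.

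Next I would verify $\mathcal{S}_m:\mathcal{X}\to\mathcal{X}$, i.e. $(\mathcal{S}_m\rho)_1(0)=(\mathcal{S}_m\rho)_2(0)$. This is the only step that really uses the geometry. For $s=0$ the ``field point'' of both the first and the second row is $\sigma_i(0)=P_0$; moreover $L(t,0)=0$, so at $s=0$ the kernels of the first two rows of $\mathcal{S}$ reduce to values $K^{i,j}(t,0)$ that depend only on $P_0$ and on $\sigma_j(t)$. Hence $H^{1,j}(t,0)=H^{2,j}(t,0)$ for $j=1,2,3$ and all $t\in[0,1]$, and substituting into the quadrature sums defining $(\mathcal{S}_m\rho)_1(0)$ and $(\mathcal{S}_m\rho)_2(0)$ makes them coincide term by term. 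Thus $\mathcal{S}_m$ maps $C([0,1])^{3}$, and in particular $\mathcal{X}$, into $\mathcal{X}$; the same computation shows $\mathcal{S}$ itself has this property.

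For collective compactness of $\{\mathcal{S}_m\}_m$ I would exploit that the Lobatto coefficients are nonnegative and satisfy $\sum_{h=0}^{m+1}\lambda_h=\int_0^1 1\,dx=1$. This gives at once the uniform bound $\|\mathcal{S}_m\|\le 3\max_{i,j}\|H^{i,j}\|_{\infty}$ and, crucially, the equicontinuity estimate
\[
|(\mathcal{S}_m\rho)_i(s)-(\mathcal{S}_m\rho)_i(s')|\le 3\,\|\rho\|_{\infty}\,\max_{i,j}\ \sup_{t\in[0,1]}\bigl|H^{i,j}(t,s)-H^{i,j}(t,s')\bigr|,
\]
whose right-hand side tends to $0$ as $|s-s'|\to0$, uniformly in $m$ and in $\rho$ with $\|\rho\|_{\infty}\le1$, by uniform continuity of the kernels on the compact square. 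Hence $\{\mathcal{S}_m\rho:\ m\in\NN,\ \|\rho\|_{\infty}\le1\}$ is bounded and equicontinuous, so relatively compact in $C([0,1])^{3}$ by Arzel\`{a}--Ascoli; this is exactly collective compactness.

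Finally, for $(\ref{convpointnormSm})$ I would fix $\rho\in\mathcal{X}$ and observe that, for each fixed $s$, the function $t\mapsto H^{i,j}(t,s)\rho_j(t)$ is continuous on $[0,1]$, so the Lobatto rule converges on it (a quadrature formula with nonnegative weights of total mass $1$ that is exact on polynomials converges on every continuous function; since no smoothness of $\rho$ is at hand, the error estimate of Corollary $\ref{corquad}$ is not available here). Therefore $(\mathcal{S}_m\rho)(s)\to(\mathcal{S}\rho)(s)$ pointwise in $s$; the sequence $\{(\mathcal{S}_m-\mathcal{S})\rho\}_m$ is equicontinuous by the estimate above together with the continuity of $\mathcal{S}\rho$, and converges pointwise on the compact set $[0,1]$, hence converges uniformly, which is $\|(\mathcal{S}_m-\mathcal{S})\rho\|_{\infty}\to0$. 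The one genuinely problem-specific point is the second step, the preservation of the constraint defining $\mathcal{X}$; once that is in place the rest is the standard Anselone--Atkinson scheme, provided one keeps track throughout of the positivity and normalization of the Lobatto weights.
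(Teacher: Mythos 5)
Your proof is correct and follows essentially the same route as the paper: reduce to the entrywise operators with continuous kernels ($K^{i,j}$ off the critical positions, $M^{i,j}$ at $(1,2)$ and $(2,1)$), get pointwise convergence from the convergence of the positive, normalized Lobatto rule on $C([0,1])$, and obtain collective compactness by the standard Anselone--Atkinson argument (the paper simply cites Kress, Theorem 12.8, for the Arzel\`a--Ascoli step you carry out explicitly). The only difference is that you also spell out the preservation of the constraint $f_1(0)=f_2(0)$, which the paper asserts without proof.
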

The method we are proposing here consists of solving, instead of the  system of integral equations (\ref{sistop1}), the approximating one
\begin{equation} \label{sistapprox}
(-\pi \mathcal{I}+\tilde{\mathcal W}_m+{\mathcal S}_m)\bar{\psi}_m=\bar{g},
\end{equation}
whose unknown is the array of functions denoted by $\bar{\psi}_m=(\bar{\psi}_{m,1},\bar{\psi}_{m,2},\bar{\psi}_{m,3})^T$. \newline
In order to compute the solution $\bar{\psi}_m$ of (\ref{sistapprox}) at the quadrature nodes  $x_l$, $l=0,1,\ldots,m+1$, let us collocate each equation in these points. In this way we obtain the following linear system of $3(m+2)$ equations in the $3(m+2)$ unknowns $\bar{\psi}_{m,j}(x_l)$, $j=1,2,3$, $l=0,1,...,m+1$
\begin{equation} \label{linearsystem}
(-\pi \mathcal{I}+\tilde{\mathcal W}_m+{\mathcal S}_m)\bar{\psi}_m(x_l)=\bar{g}(x_l), \quad  \quad l=0,1,\ldots,m+1.
\end{equation}

Rewriting this linear system in the more compact  form
\begin{equation} \label{linearsystem3}
A_m\mathbf{a}=\mathbf{b},
\end{equation}
with $A_m$  the matrix of the coefficients,
$$\mathbf{a}=\left(a_{1,0},\ldots,a_{1,m+1},a_{2,0},\ldots,a_{2,m+1},a_{3,0},\ldots,a_{3,m+1}\right)$$ the array of the unknowns and
$$\mathbf{b}=\left(b_{1,0},\ldots,b_{1,m+1},b_{2,0},\ldots,b_{2,m+1},b_{3,0},\ldots,b_{3,m+1}\right)$$ the right hand side vector, we see that system (\ref{linearsystem3}) is equivalent to the approximating problem (\ref{sistapprox}) (see, for instance, \cite[p.101]{A}). More precisely, if
$\tilde{\RR}^{3(m+2)}$   denotes the subspace of $\RR^{3(m+2)}$ containing all the arrays
$$\left(c_{1,0},\ldots,c_{1,m+1},c_{2,0},\ldots,c_{2,m+1},c_{3,0},\ldots,c_{3,m+1}\right)$$
such that $c_{1,0}=c_{2,0}$, we have that each solution $\bar{\psi}_m \in {\mathcal X}$ of (\ref{sistapprox}) furnishes a solution $\mathbf{a}$ of system (\ref{linearsystem3}) belonging to $\tilde{\RR}^{3(m+2)}$. It will merely be sufficient to evaluate $\bar{\psi}_m(s)$ at the nodes of the Lobatto formula.
Viceversa, if $\mathbf{a} \in \tilde{\RR}^{3(m+2)}$ is a solution of (\ref{linearsystem3}), there is a unique $\bar{\psi}_m(s) \in {\mathcal X}$ which is
solution of (\ref{sistapprox}) such that
\begin{equation} \label{equivalsyst}
\bar{\psi}_{m,i}(x_l)=a_{i,l}, \quad i=1,2,3, \quad l=0,1,\ldots,m+1.
\end{equation}
Then we can conclude that the operator $-\pi \mathcal{I}+\tilde{\mathcal W}_m+{\mathcal S}_m$ is invertible on the space $\mathcal X$
if and only if the matrix $A_m$ is invertible on $\tilde{\RR}^{3(m+2)}$.

Before establishing our main result, let us make some remarks.\newline
 The first one concerns the computation of the entries of $A_m$. Note that, in order to construct this matrix,
one has to calculate the quantities  $\lambda_h L(x_h,x_l)$. The worst case could occur in the evaluation of
 $\lambda_0 L(x_0,x_1)$ because the values $L(x_0,x_1)$ increase more and more as well as $m$ increases.
Nevertheless, since $\lambda_0 \sim \displaystyle \frac 1{m^2}$ (see (\ref{Lobattoweights1})) and, using $x_1-x_0 \sim \displaystyle  \frac 1{m^2}$
(see (\ref{nodes})), it is easily seen that $L(x_0,x_1) \sim m^2$ (with the constants in $\sim$ independent of $m$), one has that the products
  $\lambda_0 L(x_0,x_1)$ are uniformly bounded with respect to $m$. \newline
As a second remark we would like to point out that,  for the sake of simplicity, we have used the same number $m+2$ of quadrature nodes for the
Lobatto formula in (\ref{Lm})-(\ref{Mmij}) and (\ref{Kmij}). Nevertheless, one can generalize the proposed procedure by using also different numbers of quadrature knots on each smooth arc of the boundary $\Sigma_j$, $j=1,2,3$.

\begin{theorem} \label{maintheorem}
Let $\Sigma\setminus\{P_0\}$ of class $C^{2}$.
Assume that $\mathrm{Ker}\{-\pi \mathcal{I}+ {\mathcal W}+ {\mathcal S}\}=\{0\}$ in the space ${\mathcal X}$. Then, for  sufficiently large $m$, say $m \geq m_0$,
the operators $-\pi \mathcal{I}+\tilde{\mathcal W}_m+{\mathcal S}_m$ are invertible and their inverses are uniformly bounded on ${\mathcal X}$.
Moreover, for all $\bar{g} \in {\mathcal X} \cap C^{p}([0,1])^{^{3}}$ with  $p$ large enough, the solutions $\bar{\psi}$ of equation $(\ref{sistop1})$ and $\bar{\psi}_m$ of $(\ref{sistapprox})$, satisfy the following error estimate
\begin{equation}\label{errorestimate}
\|(\bar{{\psi}}-\bar{{\psi}}_m)(s)\|_{\infty} \leq \C [\|(\tilde{\mathcal W}_m-{\mathcal W})\bar{{\psi}}(s)\|_{\infty}+
\|({{\mathcal S}}_m-{\mathcal S})\bar{{\psi}}(s)\|_{\infty}], \quad \C \neq \C(m),
\end{equation}
where
\begin{equation} \label{errorestimate1}
\|(\tilde{\mathcal W}_m-{\mathcal W})\bar{{\psi}}(s)\|_{\infty} \leq \left\{ \begin{array}{ll}
\C \max\left\{\left(\displaystyle \frac{1}{m^{2-2\epsilon}}\right)^{\beta},\displaystyle \frac{1}{m^{1+\epsilon}}\right\}, & s  \in \left[0,\displaystyle \frac {c}{m^{2-2\epsilon}}\right]\\
\displaystyle \frac{\C}{m^2}\frac{1}{s^{\frac{1}{2}}}, & s  \in \left[\displaystyle \frac {c}{m^{2-2\epsilon}},1\right]
\end{array} \right.,
\end{equation}
with $\epsilon$ as in  $(\ref{opmodified})$ and $\displaystyle \beta=\frac{1}{1+|\chi|}$.
\end{theorem}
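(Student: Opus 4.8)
\medskip
\noindent The argument follows the classical perturbation theory for collectively compact operator approximations (see \cite{A}), applied to the factorization
\[
-\pi\mathcal I+\tilde{\mathcal W}_m+\mathcal S_m=A_m\bigl(\mathcal I+A_m^{-1}\mathcal S_m\bigr),\qquad A_m:=-\pi\mathcal I+\tilde{\mathcal W}_m,
\]
and, correspondingly, to $-\pi\mathcal I+\mathcal W+\mathcal S=A\bigl(\mathcal I+A^{-1}\mathcal S\bigr)$ with $A:=-\pi\mathcal I+\mathcal W$. First I would deal with $A_m$: by~(\ref{limnormWm}) there are $\delta>0$ and $m_1$ with $\|\tilde{\mathcal W}_m\|\le\pi-\delta$ for $m\ge m_1$, while~(\ref{convpointnormWm}) together with uniform boundedness forces $\|\mathcal W\|\le\lim_m\|\tilde{\mathcal W}_m\|<\pi$; hence, by a Neumann series, $A_m^{-1}$ and $A^{-1}$ exist with $\sup_{m\ge m_1}\|A_m^{-1}\|\le 1/\delta$, and expanding the series while iterating~(\ref{convpointnormWm}) shows that $A_m^{-1}\to A^{-1}$ pointwise on $C([0,1])^{^3}$. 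Setting $\mathcal T_m:=A_m^{-1}\mathcal S_m$ and $\mathcal T:=A^{-1}\mathcal S$, the crux of the stability part is to check that $\{\mathcal T_m\}$ is collectively compact and $\mathcal T_m\to\mathcal T$ pointwise; this I would obtain by combining the collective compactness of $\{\mathcal S_m\}$ and the pointwise convergence~(\ref{convpointnormSm}) of Theorem~\ref{opercompatti} with the pointwise convergence $A_m^{-1}\to A^{-1}$, using the elementary fact that a uniformly bounded, pointwise convergent family of operators converges uniformly on every compact set. Since $\mathcal S$ is compact (it is the pointwise limit of the collectively compact family $\{\mathcal S_m\}$), so is $\mathcal T$, and $\mathrm{Ker}(\mathcal I+\mathcal T)=\mathrm{Ker}(-\pi\mathcal I+\mathcal W+\mathcal S)=\{0\}$ by hypothesis; hence $\mathcal I+\mathcal T$ is invertible on $\mathcal X$ by the Riesz theory, and the standard perturbation theorem for collectively compact approximations then gives, for $m\ge m_0$, that $\mathcal I+\mathcal T_m$ is invertible with $\sup_{m\ge m_0}\|(\mathcal I+\mathcal T_m)^{-1}\|<\infty$. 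Combining this with $\sup_m\|A_m^{-1}\|<\infty$ yields the stated invertibility of $-\pi\mathcal I+\tilde{\mathcal W}_m+\mathcal S_m$ with uniformly bounded inverses.

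To obtain~(\ref{errorestimate}) I would subtract the exact and approximate equations, using $\bar g=(-\pi\mathcal I+\mathcal W+\mathcal S)\bar\psi=(-\pi\mathcal I+\tilde{\mathcal W}_m+\mathcal S_m)\bar\psi_m$, to get
\[
(-\pi\mathcal I+\tilde{\mathcal W}_m+\mathcal S_m)(\bar\psi-\bar\psi_m)=(\tilde{\mathcal W}_m-\mathcal W)\bar\psi+(\mathcal S_m-\mathcal S)\bar\psi,
\]
and then apply the uniformly bounded inverse just obtained; here $\bar\psi\in\mathcal X\subset C([0,1])^{^3}$ lies in the domain of every operator involved, and the smoothness assumed on $\bar g$ guarantees the corner behaviour~(\ref{behavioursolution})--(\ref{behavioursolution2}) of $\bar\psi$ that the next step exploits.

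It remains to prove the two-regime bound~(\ref{errorestimate1}), which is the technical core. Put $\mu_m:=c/m^{2-2\epsilon}$. Only the Mellin block $\mathcal L$ of $\mathcal W$ is perturbed, and it acts on $\bar\psi_1$ or $\bar\psi_2$, so everything reduces to estimating $\bigl((\mathcal L_m-\mathcal L)\bar\psi_i\bigr)(s)=e_m\bigl(L(\cdot,s)\bar\psi_i\bigr)$, $i\in\{1,2\}$. For $s\in[\mu_m,1]$ one has $\tilde{\mathcal W}_m=\mathcal W_m$, so I would use Corollary~\ref{corquad} with $r=2$, bound the best-approximation error trivially by $\|\partial_t^2\bigl(L(\cdot,s)\bar\psi_i\bigr)\varphi^2\|_1$, and estimate this weighted norm from $|\partial_t^r L(t,s)|\le\C(s+t)^{-r-1}$ (obtained by differentiating $L(t,s)=\mathrm{Im}\,(t+s\,e^{i\chi\pi})^{-1}$ in $t$), from $|\bar\psi_i^{(r)}(t)|\le\C\,t^{\beta-r}$, and from $\varphi^2(t)=t(1-t)\le t$; each of the three terms of $\partial_t^2\bigl(L(\cdot,s)\bar\psi_i\bigr)$ then integrates to a quantity $\le\C\,s^{\beta-1}\le\C\,s^{-1/2}$, using $\beta=\tfrac{1}{1+|\chi|}>\tfrac12$, which is the second line of~(\ref{errorestimate1}). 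For $s\in[0,\mu_m]$, $(\tilde{\mathcal W}_m\bar\psi)(s)$ is by~(\ref{opmodified}) the straight line through $(\mathcal W\bar\psi)(0)$ at $s=0$ and $(\mathcal W_m\bar\psi)(\mu_m)$ at $s=\mu_m$, so I would split the error into the linear-interpolation error of $\mathcal W\bar\psi$ on $[0,\mu_m]$ plus $\tfrac{s}{\mu_m}\bigl((\mathcal W_m-\mathcal W)\bar\psi\bigr)(\mu_m)$; the latter is $\le\C/(m^2\sqrt{\mu_m})\sim\C/m^{1+\epsilon}$ by the case just treated, while for the former I would show $(\mathcal W\bar\psi)_i(s)=(\mathcal L\bar\psi_{3-i})(s)=O(s^\beta)$ near $0$ (substitute $t=s\tau$ in the Mellin integral and use $\bar\psi_{3-i}(t)=O(t^\beta)$, $\bar\psi_{3-i}(0)=0$), so that both this function and its linear interpolant are $O(\mu_m^\beta)$ on $[0,\mu_m]$; since the third component contributes nothing, this is the first line of~(\ref{errorestimate1}).

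The main obstacle is precisely this last estimate on $[\mu_m,1]$: one must track how the $t$-derivatives of the Mellin kernel $L(t,s)$ blow up as $s\to0^+$ and balance them, inside the weighted $L^1$-norm of Corollary~\ref{corquad}, against the algebraic singularity $t^{\beta-r}$ of the density; the threshold exponent $2-2\epsilon$ in~(\ref{opmodified}) is chosen exactly so that the bound $m^{-2}\mu_m^{-1/2}$ on $[\mu_m,1]$ and the bound $\mu_m^{\beta}$ on $[0,\mu_m]$ are of compatible size. A further delicate point is that the whole perturbation argument must be run on the nonstandard space $\mathcal X$ rather than on $\tilde{\mathcal X}$, which requires all the operators $\tilde{\mathcal W}_m,\mathcal W,\mathcal S_m,\mathcal S$ to map $\mathcal X$ into itself — a fact supplied by Theorems~\ref{lemmaWm} and~\ref{opercompatti}.
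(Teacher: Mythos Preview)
Your proposal is correct and, for stability and for~(\ref{errorestimate}), proceeds exactly as the paper does: invert $-\pi\mathcal I+\tilde{\mathcal W}_m$ by a Neumann series via~(\ref{limnormWm}), combine with the collective compactness from Theorem~\ref{opercompatti}, and invoke the standard perturbation theorem (the paper cites Kress, Theorem~10.8 and Problem~10.3); the error identity is the same subtraction. Your treatment of the range $s\in[0,\mu_m]$ as ``linear-interpolation error of $\mathcal W\bar\psi$ plus the endpoint correction $(s/\mu_m)\bigl((\mathcal W_m-\mathcal W)\bar\psi\bigr)(\mu_m)$'' is precisely the paper's three-term split~(\ref{inequalities}), just more compactly phrased.

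The genuine difference is on $[\mu_m,1]$. You apply Corollary~\ref{corquad} directly with $r=2$ and estimate $\bigl\|\partial_t^2\bigl(L(\cdot,s)\bar\psi_i\bigr)\varphi^2\bigr\|_1$ term by term via $|\partial_t^rL|\le\C(s+t)^{-r-1}$ and $|\bar\psi_i^{(r)}(t)|\le\C\,t^{\beta-r}$, obtaining $\C\,s^{\beta-1}\le\C\,s^{-1/2}$. The paper instead takes $r=1$ in Theorem~\ref{errorequad}, then uses the product inequality
\[
E_{2m}(fg)_{\varphi,1}\le 2\|f\|_2\,E_m(g)_{\varphi,2}+E_m(f)_2\,\|g\varphi\|_2
\]
and a further Favard step to recover the second power of $m$, producing four pieces $B_1,B_2,C_1,C_2$. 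Your route is shorter and more elementary; the paper's is more modular (it separates the regularity of the kernel from that of the density) and, importantly, does \emph{not} need $\bar\psi_i(0)=0$. In your Leibniz expansion the term $L_{tt}\,\bar\psi_i$ requires $|\bar\psi_i(t)|\le\C\,t^\beta$ to yield $s^{\beta-1}$; with mere boundedness of $\bar\psi_i$ that integral is only $O(s^{-1})$, which is too weak for the stated $s^{-1/2}$. You are using~(\ref{behavioursolution}) literally, so your argument is formally fine, but the paper's own proof works with the increment $\bar\psi_i(t)-\bar\psi_i(0)$ and bounds a separate term involving $|\bar\psi_i(0)|$, which suggests~(\ref{behavioursolution}) should be read as $\bar\psi_i(t)-\bar\psi_i(0)=O(t^\beta)$. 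If so, your approach still goes through after one extra line: write $\bar\psi_i=\bar\psi_i(0)+\bigl(\bar\psi_i-\bar\psi_i(0)\bigr)$, handle $e_m\bigl(\bar\psi_i(0)\,L(\cdot,s)\bigr)$ by Lemma~\ref{propLm} with $r\ge 2/\epsilon$, and apply your $r=2$ estimate to the zero-at-the-corner remainder.
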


Let us remark that (see Theorem \ref{opercompatti})
$$\lim_{m}\|({{\mathcal S}}_m-{\mathcal S})\bar{{\psi}}(s)\|_{\infty}=0, \quad \forall s\in [0,1]$$
and the rate of convergence depends on the smoothness of the boundary $\Sigma\setminus\{P_0\}$ as well as on the behavior of the functions
 $\bar{\psi}_j$ on the interval $[0,1]$ (see (\ref{behavioursolution}),
(\ref{behavioursolution2})). \newline

Moreover, we can prove the following theorem.
\begin{theorem} \label{theoremcondition}
Denoting by $\mathrm{cond}(-\pi \mathcal{I}+\tilde{\mathcal W}_m+{\mathcal S}_m)$ the condition number of the ope\-ra\-tor
$-\pi \mathcal{I}+\tilde{\mathcal W}_m+{\mathcal S}_m:{\mathcal X} \rightarrow {\mathcal X}$  and by $\mathrm{cond}(A_m)$
the condition number of the matrix $A_m:\tilde{\RR}^{3(m+2)} \rightarrow \tilde{\RR}^{3(m+2)}$  in infinity norm, we have that, for any $m \geq m_0$,
\begin{equation} \label{conditionnumber}
\mathrm{cond}(A_m) \leq \mathrm{cond}(-\pi \mathcal{I}+\tilde{\mathcal W}_m+{\mathcal S}_m) \leq \C,
\end{equation}
where $\C \neq \C(m)$.
\end{theorem}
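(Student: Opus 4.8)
Write $\mathcal{T}_m:=-\pi\mathcal{I}+\tilde{\mathcal{W}}_m+\mathcal{S}_m$. The plan is to obtain both inequalities in $(\ref{conditionnumber})$ from the bounds already established for $\mathcal{T}_m$, together with the standard identification between a Nystr\"om operator and the matrix produced by collocating at the quadrature nodes (see \cite[Ch.~4]{A}). The right-hand inequality is immediate: $\|{-\pi}\mathcal{I}\|=\pi$; by $(\ref{limnormWm})$ the norms $\|\tilde{\mathcal{W}}_m\|$ are bounded for $m$ large; and the collective compactness of $\{\mathcal{S}_m\}_m$ (Theorem~\ref{opercompatti}) forces $\sup_m\|\mathcal{S}_m\|<\infty$. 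Hence $\|\mathcal{T}_m\|\le\C$ with $\C\neq\C(m)$, and combining this with the uniform bound $\|\mathcal{T}_m^{-1}\|\le\C$ for $m\ge m_0$ granted by Theorem~\ref{maintheorem} gives $\mathrm{cond}(\mathcal{T}_m)=\|\mathcal{T}_m\|\,\|\mathcal{T}_m^{-1}\|\le\C$.

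For the left-hand inequality I would introduce the nodal sampling map $r_m:\mathcal{X}\to\tilde{\RR}^{3(m+2)}$, $r_m(f_1,f_2,f_3)=(f_i(x_l))_{i,l}$, which has $\|r_m\|\le 1$, and the piecewise-linear reconstruction $p_m:\tilde{\RR}^{3(m+2)}\to\mathcal{X}$ with breakpoints at the Lobatto nodes, for which $\|p_m\mathbf{c}\|_\infty=\|\mathbf{c}\|_\infty$ (a piecewise-linear function attains its extremum at a breakpoint) and $r_m p_m$ is the identity; note that the interface condition $c_{1,0}=c_{2,0}$ defining $\tilde{\RR}^{3(m+2)}$ is matched exactly by the condition $f_1(0)=f_2(0)$ defining $\mathcal{X}$. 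Granting the commuting relation $A_m r_m=r_m\mathcal{T}_m$ (discussed below), for $\mathbf{c}\in\tilde{\RR}^{3(m+2)}$ one gets $\|A_m\mathbf{c}\|_\infty=\|r_m\mathcal{T}_m p_m\mathbf{c}\|_\infty\le\|\mathcal{T}_m\|\,\|\mathbf{c}\|_\infty$, hence $\|A_m\|_\infty\le\|\mathcal{T}_m\|$; and for $\mathbf{d}\in\tilde{\RR}^{3(m+2)}$, setting $\bar{\psi}_m:=\mathcal{T}_m^{-1}p_m\mathbf{d}$ and using $A_m(r_m\bar{\psi}_m)=r_m\mathcal{T}_m\bar{\psi}_m=r_mp_m\mathbf{d}=\mathbf{d}$ together with the invertibility of $A_m$ (valid for $m\ge m_0$ by the equivalence recalled just before the theorem) gives $r_m\bar{\psi}_m=A_m^{-1}\mathbf{d}$, whence $\|A_m^{-1}\mathbf{d}\|_\infty\le\|\bar{\psi}_m\|_\infty\le\|\mathcal{T}_m^{-1}\|\,\|\mathbf{d}\|_\infty$ and $\|A_m^{-1}\|_\infty\le\|\mathcal{T}_m^{-1}\|$. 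Multiplying the two estimates yields $\mathrm{cond}(A_m)\le\mathrm{cond}(\mathcal{T}_m)$.

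The one point that needs care — and what I regard as the main obstacle — is the commuting relation $A_m r_m=r_m\mathcal{T}_m$, i.e. that $(\mathcal{T}_m f)(x_l)$ depends on $f$ only through $r_m f$, in the presence of the corner modification $(\ref{opmodified})$. For $-\pi\mathcal{I}$ and for $\mathcal{S}_m$, a pure Lobatto sum over the nodes, this is evident. For $\tilde{\mathcal{W}}_m$ I would check it nodewise: at a node $x_l\ge c/m^{2-2\epsilon}$ one has $(\tilde{\mathcal{W}}_m f)(x_l)=(\mathcal{W}_m f)(x_l)$, which is a Lobatto sum $\mathcal{L}_m$ evaluated at $x_l$ plus the term $(-\pi+\bar{\Omega}_i)(x_l)f_i(x_l)$, and the latter vanishes since $x_l>0$ forces $\bar{\Omega}_i(x_l)=\pi$; at a node $x_l<c/m^{2-2\epsilon}$, $(\tilde{\mathcal{W}}_m f)(x_l)$ is an affine combination of $(\mathcal{W}_m f)(c/m^{2-2\epsilon})$ — again a Lobatto sum at the point $c/m^{2-2\epsilon}>0$ — and of $(\mathcal{W}f)(0)$, and $(\mathcal{W}f)(0)=(-\chi\pi f_1(0),-\chi\pi f_2(0),0)$ because the Mellin kernel satisfies $L(t,0)\equiv 0$, so $(\mathcal{L}f)(0)=0$, while $(-\pi+\bar{\Omega}_i)(0)=-\chi\pi$. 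In each case the value is a fixed linear functional of the sampled data, which is exactly the corresponding row of $A_m$; this also confirms that $A_m$ is well defined on $\tilde{\RR}^{3(m+2)}$. Everything else is routine bookkeeping with Theorems~\ref{lemmaWm}, \ref{opercompatti} and \ref{maintheorem}.
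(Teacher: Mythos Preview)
Your argument is correct and follows essentially the same route as the paper: construct a norm-preserving interpolant of a nodal vector in $\mathcal{X}$ (your $p_m$), use that $(\mathcal{T}_m f)(x_l)$ depends only on the nodal values of $f$ to identify $A_m\mathbf{c}$ with $r_m\mathcal{T}_m p_m\mathbf{c}$, and read off $\|A_m\|\le\|\mathcal{T}_m\|$ and $\|A_m^{-1}\|\le\|\mathcal{T}_m^{-1}\|$; the bound $\mathrm{cond}(\mathcal{T}_m)\le\C$ is then immediate from Theorems~\ref{lemmaWm}, \ref{opercompatti} and \ref{maintheorem}. Your explicit check that the corner modification $(\ref{opmodified})$ still yields a nodal-only dependence---because $\bar{\Omega}_i(s)=\pi$ for $s>0$ kills the multiplicative term at $s=c/m^{2-2\epsilon}$, and $L(t,0)\equiv 0$ reduces $(\mathcal{W}f)(0)$ to $(-\chi\pi f_1(0),-\chi\pi f_2(0),0)$---is more careful than the paper, which relies tacitly on the equivalence discussed around $(\ref{equivalsyst})$.
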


According to the decomposition of the boundary $\Sigma$  and to the parametric representation (\ref{trasf}) introduced of each arc $\Sigma_i$, the double layer potential $u$ defined by (\ref{double}), solution of the Dirichlet problem (\ref{Dirichlet}), can be rewritten as
\begin{equation} \label{double2}
u(x,y)=\sum_{i=1}^{3} \int_0^1 H_i(x,y,t) \bar{\psi_i}(t) dt, \quad \forall \ (x,y) \in D
\end{equation}
where   $\bar{\psi}_i=\psi_i\circ\sigma_i$, with $\psi_i$  the double layer density function on $\Sigma_i$, and
\[
H_i(x,y,t)=\frac{\eta'_i(t)[x-\xi_i(t)]-\xi'_i(t)[y-\eta_i(t)]}{[x-\xi_i(t)]^2+[y-\eta_i(t)]^2}.
\]
Now we propose to approximate the double layer potential $u(x,y)$ in (\ref{double2}) by means of the following function
\begin{equation} \label{doubleappr}
u_m(x,y)=\sum_{i=1}^{3} \sum_{h=0}^{m+1} \lambda_h H_i(x,y,x_h) \bar{\psi}_{m,i}(x_h),
\end{equation}
obtained by replacing each function $\bar{\psi_i}$  on the right-hand side in (\ref{double2}) with the corresponding Nystr\"om interpolant $\bar{\psi}_{m,i}$ ($i$-th component of the solution $\bar{\psi}_m$ of (\ref{sistapprox})) and, then,  by approximating all the integrals using the Lobatto quadrature rule (\ref{lobatto}) on $m+2$ points.
Let us observe that the values $\bar{\psi}_{m,i}(x_h)$ involved in the formula (\ref{doubleappr}) are just the solutions of the linear system (\ref{linearsystem}).
\begin{theorem} \label{harmonicerror}
For any $(x,y) \in D$, the double layer potential $u$ defined by $(\ref{double})$, solution of the Dirichlet problem $(\ref{Dirichlet})$, and the function $u_m$ given by
(\ref{doubleappr}) satisfy the following
pointwise error estimate
\begin{equation} \label{potentialerror}
|u(x,y)-u_m(x,y)| \leq  \frac{\C}{m}\left(\frac{1}{d^2}+\frac{1}{d}\right)+\frac{\C'}{d}\left\|\bar{\psi}-\bar{\psi}_m\right\|_{\infty},
\end{equation}
where $d=\displaystyle \min_{i=1,2,3}d_i$, with $d_i=\displaystyle \min_{0 \leq t \leq 1}|(x,y)-(\xi_i(t),\eta_i(t))|$, and $\C$, $\C'$ are positive constants independent of $(x,y)$ and $m$.
\end{theorem}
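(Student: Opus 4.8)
The plan is to split the error into a \emph{quadrature} part, in which the Lobatto rule acts on the exact densities $\bar\psi_i$, and a \emph{propagation} part, in which $\bar\psi_i$ is replaced by the computed $\bar\psi_{m,i}$ at the quadrature nodes. Set $f_i(t)=H_i(x,y,t)\,\bar\psi_i(t)$. Applying the Lobatto rule $(\ref{lobatto})$ to each $f_i$, and recalling that the coefficients $\lambda_h$ are nonnegative and that $(\ref{lobatto})$ is exact on constants, so that $\sum_{h=0}^{m+1}\lambda_h=1$, one obtains the exact identity
\begin{equation*}
u(x,y)-u_m(x,y)=\sum_{i=1}^{3}e_m(f_i)+\sum_{i=1}^{3}\sum_{h=0}^{m+1}\lambda_h\,H_i(x,y,x_h)\bigl(\bar\psi_i(x_h)-\bar\psi_{m,i}(x_h)\bigr).
\end{equation*}
The double sum is bounded in modulus by $\bigl(\max_i\max_{0\le t\le 1}|H_i(x,y,t)|\bigr)\sum_{i=1}^{3}\|\bar\psi_i-\bar\psi_{m,i}\|_\infty$, and by the definition $(\ref{normdirectproduct})$ of the norm, $\sum_{i}\|\bar\psi_i-\bar\psi_{m,i}\|_\infty\le 3\|\bar\psi-\bar\psi_m\|_\infty$; hence it suffices to estimate $\max_{0\le t\le 1}|H_i(x,y,t)|$ and the quadrature remainders $e_m(f_i)$.

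For the pointwise bounds on $H_i$ I would use that $(x,y)\in D$: the denominator of $H_i(x,y,t)$ equals $|(x,y)-(\xi_i(t),\eta_i(t))|^2\ge d_i^2\ge d^2$, while its numerator is $O\bigl(|(x,y)-(\xi_i(t),\eta_i(t))|\bigr)$ since $\sigma_i\in C^2([0,1])$ and $(x,y)$ ranges over the bounded set $D$; therefore $\max_{0\le t\le1}|H_i(x,y,t)|\le \C/d$. Differentiating once in $t$ introduces one extra factor $|(x,y)-(\xi_i(t),\eta_i(t))|^{-1}$ (a $D'/D$-type term), which gives $\max_{0\le t\le1}|\partial_t H_i(x,y,t)|\le \C\,(1/d+1/d^{2})$. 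Observe that, $\sigma_i$ being of class $C^2$ only, $H_i(x,y,\cdot)\in C^1([0,1])$ and not more, which is the reason why the quadrature error is controlled at the rate $1/m$.

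To estimate $e_m(f_i)$ I would invoke Theorem \ref{errorequad}, after checking $f_i\in W^1_1$. One has $f_i'=(\partial_t H_i)\bar\psi_i+H_i\bar\psi_i'$; the first summand is bounded, while for $i\in\{1,2\}$, $|\bar\psi_i'(t)|\le \C t^{\beta-1}$ with $\beta=\frac{1}{1+|\chi|}>1/2$ by $(\ref{behavioursolution})$--$(\ref{behavioursolution2})$, and $\bar\psi_3$ is smooth; thus $|f_i'(t)\varphi(t)|\le \C\bigl(\|\partial_t H_i\|_\infty+\|H_i\|_\infty\,t^{\beta-1}\bigr)\varphi(t)$, which is integrable on $[0,1]$ because $t^{\beta-1}\varphi(t)\sim t^{\beta-1/2}$ with $\beta-1/2>0$. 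Theorem \ref{errorequad} then gives
\begin{equation*}
|e_m(f_i)|\le\frac{\C}{m}\,E_{2m}(f_i')_{\varphi,1}\le\frac{\C}{m}\|f_i'\varphi\|_1\le\frac{\C}{m}\Bigl(\|\partial_t H_i\|_\infty\,\|\bar\psi_i\|_\infty\,\|\varphi\|_1+\|H_i\|_\infty\,\|\bar\psi_i'\varphi\|_1\Bigr),
\end{equation*}
where $\|\bar\psi_i'\varphi\|_1$ is finite and independent of $m$ by the same singularity estimate. Inserting $\|H_i\|_\infty\le\C/d$ and $\|\partial_t H_i\|_\infty\le\C(1/d+1/d^2)$ yields $\sum_i|e_m(f_i)|\le\frac{\C}{m}(1/d^2+1/d)$, and combining this with the bound for the propagation term produces $(\ref{potentialerror})$ with constants independent of $(x,y)$ and $m$.

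The computations are otherwise routine; the one point needing care is the bookkeeping of the powers of $d$, namely that the extra factor $1/d$ coming from $\partial_t H_i$ multiplies the \emph{bounded} quantity $\bar\psi_i$ rather than the singular $\bar\psi_i'$, so that the $\varphi$--weight built into Theorem \ref{errorequad} is exactly what absorbs the $t^{\beta-1}$ singularity; mismatching these would spoil both the $1/m$ rate and the power of $d$ in $(\ref{potentialerror})$.
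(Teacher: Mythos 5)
Your proposal is correct and follows essentially the same route as the paper: the same split into a quadrature term $e_m(H_i(x,y,\cdot)\bar\psi_i)$ and a propagation term handled via $\sum_h\lambda_h=1$ and $\|H_i(x,y,\cdot)\|_\infty\le \C/d$, with the quadrature term controlled by Theorem \ref{errorequad} and the bound $E_{2m}(f')_{\varphi,1}\le\|f'\varphi\|_1$. Your explicit check that the $\varphi$-weight absorbs the $t^{\beta-1}$ singularity of $\bar\psi_i'$ (so that $H_i(x,y,\cdot)\bar\psi_i\in W^1_1$) is a detail the paper leaves implicit, but the argument is the same.
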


Let us observe that the first addendum on the right hand side of (\ref{potentialerror}) could converge to zero
with rate greater than $1/m$ if the boundary $\Sigma \setminus P_0$ is  $(q+2)$-times differentiable, for some $q>0$.
Moreover, from the previous estimate, we can deduce that the error becomes smaller and smaller as well as the point $(x,y) \in D$ moves away from the boundary $\Sigma$.

\section{Proofs} \label{Section4}
In order to prove Theorem \ref{errorequad} we need the following result (see \cite{Szego}).
\begin{lemma}\label{coeff}
Let  $x_k$  and $\lambda_k$, $k=0,1,\ldots,m+1$, be the nodes and the coefficients of the quadrature rule defined in $(\ref{lobatto})$, respectively.
Then, setting  $\Delta x_{k}=x_{k+1}-x_k$, $k=0,1,\ldots,m$, one has
\begin{equation}  \label{nodes}
\Delta x_{k} \sim \left\{
  \begin{array}{ll}
  \displaystyle \frac{\sqrt{x_{k+1}(1-x_{k+1})}}{m}  , & k=0  \vspace{0.2cm} \\

 \displaystyle \frac{\sqrt{x_k(1-x_k)}}{m}, & k=1,\ldots,m.
  \end{array}
\right.
\end{equation}
and
\begin{equation} \label{lambda}
\lambda_k \sim \left\{
  \begin{array}{ll}
  \Delta x_{k} , &  k=0,1,\ldots,m  \vspace{0.2cm} \\
  \Delta x_{k-1}, & k=m+1.
  \end{array}
\right.
\end{equation}
\end{lemma}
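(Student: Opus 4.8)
The plan is to reduce everything to the classical asymptotic theory of the zeros and Christoffel numbers of Jacobi polynomials, as developed in \cite{Szego}. The affine map $x=\frac{1+t}{2}$ carries $[-1,1]$ onto $[0,1]$ and turns the weight $v^{1,1}(x)=x(1-x)$ into $\frac{1-t^2}{4}$, a constant multiple of the Jacobi weight $(1-t)(1+t)$. Hence the interior nodes $x_1<\cdots<x_m$, being the zeros of $p_m(v^{1,1})$, correspond to the zeros $t_k=2x_k-1$ of the classical Jacobi polynomial $P_m^{(1,1)}$. Both displayed assertions, $(\ref{nodes})$ and $(\ref{lambda})$, will follow once we import two standard facts about these zeros: their mutual spacing and the asymptotics of the associated Gauss--Jacobi Christoffel numbers.

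For the spacing $(\ref{nodes})$, I would write the zeros as $t_k=\cos\theta_k$, $\theta_k\in(0,\pi)$, and invoke the classical uniform estimate $|\theta_{k+1}-\theta_k|\sim \frac1m$ for the zeros of $P_m^{(1,1)}$. Using
\[
t_{k+1}-t_k=2\sin\!\left(\tfrac{\theta_k+\theta_{k+1}}{2}\right)\sin\!\left(\tfrac{\theta_k-\theta_{k+1}}{2}\right)
\]
together with $\sin\frac{\theta_k-\theta_{k+1}}{2}\sim\frac1m$ and $\sin\frac{\theta_k+\theta_{k+1}}{2}\sim\sin\theta_k=\sqrt{1-t_k^2}$ (the two neighbouring angles being comparable because their gap is $O(1/m)$), one obtains $|t_{k+1}-t_k|\sim\frac{\sqrt{1-t_k^2}}{m}$. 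Since $\sqrt{1-t_k^2}=2\sqrt{x_k(1-x_k)}$ and $\Delta x_k=\frac12(t_{k+1}-t_k)$, this is exactly $\Delta x_k\sim\frac{\sqrt{x_k(1-x_k)}}{m}$ for the interior indices $k=1,\dots,m-1$. The two boundary spacings are handled by the classical location of the extreme Jacobi zeros, which lie within a distance of order $m^{-2}$ from $\pm1$; this gives $x_1\sim m^{-2}$ and $1-x_m\sim m^{-2}$, from which $\Delta x_0=x_1\sim\frac{\sqrt{x_1}}{m}\sim\frac{\sqrt{x_1(1-x_1)}}{m}$ and $\Delta x_m=1-x_m\sim\frac{\sqrt{1-x_m}}{m}\sim\frac{\sqrt{x_m(1-x_m)}}{m}$, in agreement with the stated formula (which deliberately uses $x_{k+1}$ rather than $x_k$ in the degenerate case $k=0$).

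For the weights $(\ref{lambda})$, I would first recognise the interior coefficients $\lambda_k$ as rescaled Gauss--Jacobi Christoffel numbers. Indeed, since $x_1,\dots,x_m$ are the Gauss nodes for the weight $v^{1,1}$, the integral $\mu_k=\int_0^1 l_k(x)v^{1,1}(x)\,dx$ appearing in $(\ref{Lobattoweights2})$ is precisely the Christoffel number of the $m$-point Gauss--Jacobi rule for $v^{1,1}$ at $x_k$, so that $\lambda_k=\mu_k/v^{1,1}(x_k)$. The standard uniform asymptotics for Jacobi Christoffel numbers give $\mu_k\sim v^{1,1}(x_k)\,\Delta x_k$ (the same ``Riemann sum'' relation $\mu_k\sim w(x_k)\Delta x_k$ that for the Legendre weight reduces to the familiar $\mu_k\sim\frac{\pi}{m}\sqrt{1-t_k^2}$), whence $\lambda_k\sim\Delta x_k$ for $k=1,\dots,m$. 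The two endpoint weights are immediate from their explicit value $(\ref{Lobattoweights1})$: $\lambda_0=\lambda_{m+1}=\frac{1}{(m+1)(m+2)}\sim m^{-2}$, which matches $\Delta x_0\sim m^{-2}$ and $\Delta x_m\sim m^{-2}$ found above, yielding $\lambda_0\sim\Delta x_0$ and $\lambda_{m+1}\sim\Delta x_m$.

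I expect the only delicate point to be the \emph{uniformity} of the quoted asymptotics near the endpoints, i.e. ensuring that the spacing estimate and the Christoffel relation $\mu_k\sim v^{1,1}(x_k)\,\Delta x_k$ hold with constants independent of both $m$ and $k$ all the way down to $k=1$ and up to $k=m$, rather than only for zeros confined to a fixed subinterval of $(0,1)$. This is exactly where I would lean on the sharp uniform Jacobi estimates in \cite{Szego}, and it is also what forces the separate treatment of the degenerate indices $k=0,m,m+1$, where the generic factor $\sqrt{x_k(1-x_k)}/m$ would vanish and must be replaced, as in the statement, by the neighbouring-node version or by the explicit endpoint values.
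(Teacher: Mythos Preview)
The paper does not give a proof of this lemma: it is stated with the parenthetical ``(see \cite{Szego})'' and then used without further justification. So there is no argument in the paper against which to compare your proposal.

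Your sketch is the natural way to recover the statement from the classical Szeg\H{o} material and is essentially correct. The reduction via $x=(1+t)/2$ to the Jacobi weight $(1-t)(1+t)$ is the right starting point; the spacing estimate $\Delta x_k\sim \varphi(x_k)/m$ is exactly the transferred version of the standard $\theta$--spacing for Jacobi zeros; and the identification $\lambda_k=\mu_k/v^{1,1}(x_k)$ with $\mu_k$ the Gauss--Jacobi Christoffel number, together with the uniform asymptotic $\mu_k\sim v^{1,1}(x_k)\Delta x_k$, gives the interior weight relation. Your treatment of the endpoint indices via the explicit value $(\ref{Lobattoweights1})$ and the $O(m^{-2})$ location of the extreme Jacobi zeros is also the right way to close the argument. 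The caveat you flag about uniformity in $k$ near the endpoints is genuine and is indeed what the sharp Szeg\H{o} estimates provide; beyond that there is no gap.
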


\begin{proof}\emph{of Theorem \ref{errorequad}}
We can proceed analogously to the proof of
Theorem 5.1.8  in \cite{MMlibro}. Then,
it will be sufficient to prove the following  inequality
\begin{equation}\label{equ1}
\sum_{k=0}^{m+1} \lambda_k |f(x_k)| \leq \C \left( \|f\|_1+ \frac{1}{m} \|f'\varphi\|_1 \right).
\end{equation}
Indeed, since the Lobatto quadrature rule is exact for polynomials of degree at most $2m+1$, for any $P \in \PP_{2m+1}$ we can write
\begin{equation}
|e_m(f)| \leq \int_0^1 |f(x)-P(x)| dx + \sum_{k=0}^{m+1} \lambda_k |f(x_k)-P(x_k)|.
\end{equation}
Hence, by applying (\ref{equ1}) and the following inequality (\cite{KY})
\begin{equation}
\|(f-P)' \varphi\|_1\leq \C (2m+1) \|f-P\|_1+ E_{2m}(f')_{\varphi,1},
\end{equation}
 we have
\begin{eqnarray*}
|e_m(f)| & \leq \C \left[ \|f-P\|_1 + \frac{1}{m} \|(f-P)' \varphi\|_1 \right] \\
& \leq \C \left[ \|f-P\|_1 + \frac{1}{m} E_{2m}(f')_{\varphi,1} \right]
\end{eqnarray*}
from which, by taking the infimum on $P \in \PP_{2m+1}$ and using (\ref{Favard}),  we obtain
\begin{eqnarray*}
|e_m(f)| & \leq & \C \left[ E_{2m+1}(f)_1 + \frac{1}{m} E_{2m}(f')_{\varphi,1} \right] \\ & \leq & \frac{\C}{m} E_{2m}(f')_{\varphi,1}
\end{eqnarray*}
i.e. the thesis.

In order to prove (\ref{equ1}) we note that, in virtue of Lemma \ref{coeff}, we can write
\begin{equation} \label{sum}
\sum_{k=0}^{m+1} \lambda_k |f(x_k)| \leq \C  \sum_{k=0}^{m+1} \Delta x_k |f(x_k)|,
\end{equation}
where we set $\Delta x_{m+1}=\Delta x_{m}$.
Then we apply the first one of the  following ine\-qualities
\begin{equation}\label{dis1}
\left.
  \begin{array}{ll}
    (b-a) |f(a)| \\
    (b-a) |f(b)|
  \end{array}
\right\} \leq   \left[ \int_a^b |f(t)| dt + (b-a) \int_a^b |f'(t)| dt \right]
\end{equation}
with  $a=x_k$ and $b={x_{k+1}}$ in order to estimate the terms on the right hand side of (\ref{sum}) with $k=0,1,\ldots,m$ and we have
\begin{eqnarray} \label{addend1}
\Delta x_k |f(x_k)| & \leq & \left[ \int_{x_k}^{x_{k+1}} |f(x)| dx + \Delta x_k \int_{x_k}^{x_{k+1}} |f'(x)| dx \right] \nonumber \\
 & \leq & \C  \left[ \int_{x_k}^{x_{k+1}} |f(x)| dx + \frac{1}{m} \int_{x_k}^{x_{k+1}} |(f' \varphi)(x)| dx \right]
\end{eqnarray}
being, for $x \in [x_k,x_{k+1}]$, $x_k \sim  x \sim x_{k+1}$ and $1-x_k \sim 1-x \sim 1-x_{k+1}$. \newline
For the term $k=m+1$, we can apply  the second inequality of (\ref{dis1})  and obtain
\begin{equation} \label{addend2}
\Delta x_{m+1} |f(x_{m+1})| \leq \C \left[ \int_{x_m}^{x_{m+1}} |f(x)| dx + \frac{1}{m} \int_{x_m}^{x_{m+1}} |(f' \varphi)(x)| dx \right].
\end{equation}
Finally, summing up on $k=0,1,...,m+1$ inequalities (\ref{addend1})-(\ref{addend2}), we can deduce (\ref{equ1}). \qquad
\end{proof}

\begin{proof}\emph{of Theorem \ref{theoremsolv}}
We first prove that  ${\mathcal W}:{\mathcal X} \rightarrow {\mathcal X}$ is a bounded operator and satisfies
\begin{equation} \label{normW}
\|{\mathcal W}\| < \pi.
\end{equation}
From well known results (see, for instance, \cite[p. 393]{A}) it follows that for any array of functions
$\rho=(\rho_1,\rho_2,\rho_3)^T \in  {\mathcal X}$ one has that ${\mathcal W}\rho \in C([0,1])^{^3}$.
Moreover, it is easy to see that ${\mathcal W}\rho \in {\mathcal X}$ and
if $\|\rho\|_{\infty} \leq 1$,
\[
\|{\mathcal W}\rho\|_{\infty} \leq |\chi|\pi<\pi.
\]
Therefore, since, for $\mathcal{I}:{\mathcal X} \to {\mathcal X}$, $\|-\pi \mathcal{I}\|=\pi$, by applying the geometric series theorem we deduce that  $(-\pi \mathcal{I}+ {\mathcal W})^{-1}$   exists and is a bounded operator
on ${\mathcal X}$ into ${\mathcal X}$
with
\[
\|(-\pi \mathcal{I}+ {\mathcal W})^{-1}\| \leq \frac{1}{\pi - \|{\mathcal W}\|}.
\]
Consequently,  we can reformulate equation (\ref{sistop1}) as
\begin{equation} \label{sistop2}
\bar{\psi}+(-\pi \mathcal{I}+ {\mathcal W})^{-1} {\mathcal{S}} \bar{\psi}=(-\pi \mathcal{I}+ {\mathcal{W}})^{-1} \bar{g}.
\end{equation}
Now, let us note that the operator ${\mathcal{S}}$ also maps ${\mathcal X}$ into ${\mathcal X}$ and it is compact since it is a matrix of compact operators.
Hence $(-\pi \mathcal{I}+ {\mathcal W})^{-1} {\mathcal{S}}$ is a compact operator, too. Thus for equation (\ref{sistop2})
the Fredholm alternative holds true and from the hypothesis it follows that the system (\ref{sistop1}) is unisolvent in ${\mathcal X}$ for each right-hand side $\bar{g} \in {\mathcal X}$. \newline
In particular, if $\bar{g} \in \tilde{\mathcal X}$ then the vector
$\bar{\psi}=(-\pi \mathcal{I}+ {\mathcal W}+{\mathcal S})^{-1}\bar{g}$ also belongs to the subspace $\tilde{\mathcal X}$. In fact, since the operator $-\pi \mathcal{I}+ {\mathcal W}+{\mathcal S}=-\pi+\mathcal{K}$ is invertible in $\tilde{\mathcal X}$ (see (\ref{opeta})), there exists an array $\bar{\varphi} \in \tilde{\mathcal X}\subset {\mathcal X}$ such that $\bar{\varphi}=(-\pi \mathcal{I}+ {\mathcal W}+{\mathcal S})^{-1}\bar{g}$. Then, by the assumption $\bar{\psi}=\bar{\varphi}$ follows.
\end{proof}

In order to be able to prove Theorem \ref{lemmaWm} we need to prove the following two lemmas.
\begin{lemma}\label{propLm}
Let
\[
L(t,s)=-\frac{s \sin {(\chi \pi)}}{s^2+2ts \cos {(\chi \pi)}+t^2}, \quad t,s \in [0,1],
\]for some $\chi \in \RR$, $|\chi|<-1$, and let  $e_m$   be the functional defined as in $(\ref{lobatto})$.
Then, for each $s \in  (0,1]$ one has
$$e_m(L(\cdot,s)) \leq \C \frac{r!}{m^r} \frac{1}{s^{r/2}},$$
where $r \in \NN$ and $\C \neq \C(m)$. 
\end{lemma}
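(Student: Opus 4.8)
The plan is to deduce the estimate directly from Corollary \ref{corquad}, applied to $f=L(\cdot,s)$ with the prescribed integer $r\geq1$. First I would observe that for fixed $s\in(0,1]$ the denominator of $L$ never vanishes on $[0,1]$: writing
\[
t^2+2ts\cos(\chi\pi)+s^2=\big(t+s\cos(\chi\pi)\big)^2+s^2\sin^2(\chi\pi)\geq s^2\sin^2(\chi\pi)>0,
\]
which uses $0<|\chi|<1$, the function $L(\cdot,s)$ is $C^\infty$ on $[0,1]$ and hence belongs to $W^1_r$ for every $r$. Combining Corollary \ref{corquad} with the trivial bound $E_{2m+1-r}(f^{(r)})_{\varphi^r,1}\leq\|f^{(r)}\varphi^r\|_1$, the claim reduces to proving $\|L^{(r)}(\cdot,s)\,\varphi^r\|_1\leq\C\, r!\,s^{-r/2}$ with $\C$ independent of $m$.

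To estimate the $r$-th ($t$-)derivative I would factor the quadratic denominator over $\mathbb C$. Its roots form the conjugate pair $t_\pm=-s\,e^{\mp i\chi\pi}$, which are non-real precisely because $0<|\chi|<1$, have modulus $s$, and satisfy $t_+-t_-=2is\sin(\chi\pi)$. Partial fractions then give $L(t,s)=\tfrac{i}{2}\big[(t-t_+)^{-1}-(t-t_-)^{-1}\big]$, so that $L^{(r)}(t,s)=\tfrac{i(-1)^r r!}{2}\big[(t-t_+)^{-r-1}-(t-t_-)^{-r-1}\big]$. Since $t$ is real and $t_-=\overline{t_+}$, both denominators have modulus $\sqrt{t^2+2ts\cos(\chi\pi)+s^2}$, whence
\[
|L^{(r)}(t,s)|\leq\frac{r!}{\big(t^2+2ts\cos(\chi\pi)+s^2\big)^{(r+1)/2}},\qquad t\in[0,1].
\]

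For the weighted $L^1$ norm I would bound $\varphi(t)^r=(t(1-t))^{r/2}\leq t^{r/2}$ and rescale $t=s\tau$, which makes the $s$-dependence explicit:
\[
\|L^{(r)}(\cdot,s)\,\varphi^r\|_1\leq r!\int_0^1\frac{t^{r/2}\,dt}{\big(t^2+2ts\cos(\chi\pi)+s^2\big)^{(r+1)/2}}=\frac{r!}{s^{r/2}}\int_0^{1/s}\frac{\tau^{r/2}\,d\tau}{\big(\tau^2+2\tau\cos(\chi\pi)+1\big)^{(r+1)/2}}.
\]
The last integral is majorised, uniformly in $s$, by $\int_0^{\infty}\tau^{r/2}\big(\tau^2+2\tau\cos(\chi\pi)+1\big)^{-(r+1)/2}\,d\tau$, which is finite: the integrand is $O(\tau^{r/2})$ as $\tau\to0^+$ and $O(\tau^{-r/2-1})$ as $\tau\to+\infty$, both integrable since $r\geq1$, while $\tau^2+2\tau\cos(\chi\pi)+1$ has strictly negative discriminant and therefore stays bounded away from $0$ on $[0,\infty)$. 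This produces a constant depending only on $r$ and $\chi$, and substituting back into Corollary \ref{corquad} gives the assertion.

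I do not expect a deep obstacle here. The one point that deserves care is extracting the sharp power $s^{-r/2}$ rather than something weaker, which the homogeneity scaling $t=s\tau$ handles cleanly; and one must check that $L(\cdot,s)$ is genuinely smooth so that Corollary \ref{corquad} applies at all — this is exactly where the hypothesis $0<|\chi|<1$ is used, through the absence of real zeros of $t^2+2ts\cos(\chi\pi)+s^2$.
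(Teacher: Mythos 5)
Your proof is correct and follows essentially the same route as the paper's: a partial-fraction decomposition of $L(\cdot,s)$ over $\mathbb{C}$, an explicit bound on $\partial_t^r L(\cdot,s)$, the substitution $t=s\tau$ to extract the factor $s^{-r/2}$, and an application of Corollary \ref{corquad} together with $E_{2m+1-r}(f^{(r)})_{\varphi^r,1}\leq\|f^{(r)}\varphi^r\|_1$. The only (harmless) difference is that you bound the derivative directly via $|t-t_\pm|=\sqrt{t^2+2ts\cos(\chi\pi)+s^2}$, yielding $r!\,(t^2+2ts\cos(\chi\pi)+s^2)^{-(r+1)/2}$, which is slightly sharper than the paper's binomial-expansion bound $r!\,(t+s)^{r+1}(t^2+2ts\cos(\chi\pi)+s^2)^{-(r+1)}$; both lead to the same convergent integral after rescaling.
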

\begin{proof}
At first let us note that
$$L(t,s)=
-\frac{1}{2i} \left[ \frac{1}{t+e^{-i\chi \pi}s}-\frac{1}{t+e^{i \chi \pi}s}\right],$$
from which, for any fixed integer $r$, we have
\begin{eqnarray*}
\frac{\partial^r}{\partial t^r} L(t,s)&= &-\frac{1}{2i}\left[ \frac{(-1)^r r!}{(t+e^{-i\chi \pi}s)^{r+1}}-\frac{(-1)^r r!}{(t+e^{i\chi \pi}s)^{r+1}} \right] \\ & =&-
\frac{(-1)^r r!}{2i} \frac{(t+e^{i\chi \pi}s)^{r+1}-(t+e^{-i\chi \pi}s)^{r+1}}{\left[(t+e^{-i\chi \pi}s) (t+e^{i\chi \pi}s)\right]^{r+1}}  \\ & = &-\frac{(-1)^r r!}{2i}  \left[ \frac{ \displaystyle \sum_{k=0}^{r+1} \left(\begin{array}{c} r+1 \\ k \end{array} \right) t^k s^{r+1-k}[e^{i \chi \pi (r+1-k)}-e^{-i\chi \pi (r+1-k)}]}{(t^2+2ts \cos{\chi \pi}+s^2)^{r+1}}\right] \\ &= &-(-1)^r r!  \left[ \frac{ \displaystyle \sum_{k=0}^{r+1} \left(\begin{array}{c} r+1 \\ k \end{array} \right) t^k s^{r+1-k}\sin{(\chi \pi (r+1-k))}}{(t^2+2ts \cos{\chi \pi}+s^2)^{r+1}}\right].
\end{eqnarray*}
Then
\begin{eqnarray} \label{ineqderivL}
\left| \frac{\partial^r}{\partial t^r} L(t,s)\right| & \leq & r! \frac{\left| \displaystyle \sum_{k=0}^{r+1} \left(\begin{array}{c} r+1 \\ k \end{array} \right) t^k s^{r+1-k} \right|}{(t^2+2ts \cos{\chi \pi}+s^2)^{r+1}}  \\ &= &r! \frac{(t+s)^{r+1}}{(t^2+2ts \cos{\chi \pi}+s^2)^{r+1}} \nonumber
\end{eqnarray}
and, consequently,
\begin{eqnarray*}
\left \|\frac{\partial^r}{\partial t^r} L(\cdot,s) \varphi^r \right \|_1 &=& \int_0^1 \left|\frac{\partial^r}{\partial t^r} L(t,s)\right| \varphi^r(t) dt \\ & \leq & r! \int_0^1 \frac{(t+s)^{r+1} t^{r/2}}{(t^2+2ts \cos{\chi \pi}+s^2)^{r+1}} dt.
\end{eqnarray*}
Now, setting $t=sx$, we can deduce
\begin{eqnarray} \label{estimate}
\left \|\frac{\partial^r}{\partial t^r} L(\cdot,s) \varphi^r \right \|_1 & \leq & \frac{r!}{s^{r/2}} \int_0^{1/s} \frac{(x+1)^{r+1} x^{r/2}}{(x^2+2x \cos{\chi \pi}+1)^{r+1}} dx \nonumber \\ & \leq & \frac{r!}{s^{r/2}} \int_0^\infty \frac{(x+1)^{r+1} x^{r/2}}{(x^2+2x \cos{\chi \pi}+1)^{r+1}} dx   \leq \C \frac{r!}{s^{r/2}}.
\end{eqnarray}
Thus, by applying Corollary \ref{corquad}, for $r \in \NN$, it results
\begin{equation}\label{sticol}
|e_m(L(\cdot,s))|  \leq \frac{\C}{m^r} E_{2m+1-r} \left( \frac{\partial^r}{\partial t^r} L(\cdot,s)\right)_{\varphi^r,1}  \leq \frac{\C}{m^r} \left \| \frac{\partial^r}{\partial t^r} L(\cdot,s) \varphi^r \right\|_1
\end{equation}
and, combining (\ref{sticol}) with (\ref{estimate}), the thesis follows.
\end{proof}

\begin{lemma}\label{densesubspace}
Let ${\mathcal X}$ be the space of functions defined in $(\ref{space})$ and
\begin{equation} \label{ptilde}
\tilde{\PP}^{^3}=\PP^{^3}\cap {\mathcal X}
\end{equation}
where $\PP$ is the set of all polynomials on $[0,1]$. Then $\tilde{\PP}^{^3}$
is a dense subspace in ${\mathcal X}$.
\end{lemma}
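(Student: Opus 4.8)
The plan is to show that any $(f_1,f_2,f_3)^T \in {\mathcal X}$ can be approximated in the norm $\|\cdot\|_\infty$ (the product-space norm \eqref{normdirectproduct}) by elements of $\tilde{\PP}^{^3}$, i.e. by triples of polynomials $(p_1,p_2,p_3)^T$ satisfying the single constraint $p_1(0)=p_2(0)$. First I would fix $\varepsilon>0$ and, using the classical Weierstrass approximation theorem on $[0,1]$, pick polynomials $q_1,q_2,q_3$ with $\|f_i-q_i\|_\infty<\varepsilon$ for $i=1,2,3$. These $q_i$ need not satisfy the matching condition at $0$, but since $f_1(0)=f_2(0)$ we have
\[
|q_1(0)-q_2(0)| = |(q_1(0)-f_1(0))-(q_2(0)-f_2(0))| \leq \|f_1-q_1\|_\infty+\|f_2-q_2\|_\infty < 2\varepsilon .
\]
So the obstruction to landing in $\tilde{\PP}^{^3}$ is only a small scalar mismatch, which I will correct by a rank-one adjustment.

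The correction step is the heart of the argument, though it is elementary. Set $c:=q_1(0)-q_2(0)$, so $|c|<2\varepsilon$. Define $p_1:=q_1-c$ (a constant shift), $p_2:=q_2$, $p_3:=q_3$. Then $p_1(0)=q_1(0)-c=q_2(0)=p_2(0)$, so $(p_1,p_2,p_3)^T\in\PP^{^3}\cap{\mathcal X}=\tilde{\PP}^{^3}$. Moreover $\|f_1-p_1\|_\infty\leq\|f_1-q_1\|_\infty+|c|<3\varepsilon$, while $\|f_2-p_2\|_\infty<\varepsilon$ and $\|f_3-p_3\|_\infty<\varepsilon$. Hence
\[
\|(f_1,f_2,f_3)^T-(p_1,p_2,p_3)^T\|_\infty = \max_{i=1,2,3}\|f_i-p_i\|_\infty < 3\varepsilon .
\]
Since $\varepsilon>0$ was arbitrary, $\tilde{\PP}^{^3}$ is dense in ${\mathcal X}$. (Alternatively, instead of a constant shift one could subtract $c(1-t)$ from $q_1$, which preserves the value at $t=1$ as well; but since ${\mathcal X}$ imposes no constraint at $t=1$, the constant shift suffices, and this variant would be the natural one were we working in $\tilde{\mathcal X}$ instead.)

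There is essentially no serious obstacle here: the only mild subtlety is verifying that $\tilde{\PP}^{^3}$ is genuinely a linear subspace of ${\mathcal X}$ (clear, since the constraint $p_1(0)=p_2(0)$ is linear and polynomials are closed under the operations used) and that the adjustment does not destroy polynomiality (clear, since we only add constants). If one wanted to be more structural, one could instead argue: $C([0,1])^{^3}$ has $\PP^{^3}$ dense by Weierstrass, ${\mathcal X}$ is the kernel of the bounded linear functional $(f_1,f_2,f_3)\mapsto f_1(0)-f_2(0)$, and a dense subspace intersected with a closed hyperplane that it "surjects onto" remains dense in that hyperplane — but spelling this out is no shorter than the explicit correction above, so I would present the explicit version.
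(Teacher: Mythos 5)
Your proof is correct, and it takes a different (and in fact more robust) route than the paper's. Both arguments start the same way: approximate $(f_1,f_2,f_3)\in{\mathcal X}$ by an unconstrained triple of polynomials via Weierstrass, observe that the mismatch $q_1(0)-q_2(0)$ is small because $f_1(0)=f_2(0)$, and then repair that mismatch. The difference is in the repair. You subtract the constant $c=q_1(0)-q_2(0)$ from $q_1$, an additive rank-one correction that costs at most $2\varepsilon$ in sup-norm and works unconditionally. The paper instead builds a corrected sequence $q_{m,i}(x)=x\,p_{m-1,i}(x)+(1-x)\,p_{m-1,i}(x)\,p_{m-1,i'}(0)/\lim_m p_{m,i}(0)$ (with $i'$ the other index in $\{1,2\}$), i.e.\ a multiplicative rescaling of the approximants near $x=0$ designed so that $q_{m,1}(0)=q_{m,2}(0)$ exactly. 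That construction requires dividing by $\lim_m p_{m,i}(0)=\varphi_i(0)$ and is therefore not defined as written when $\varphi_1(0)=\varphi_2(0)=0$, a case the paper does not address separately; your additive shift handles it with no extra work. So your version is not only a valid alternative but arguably cleaner, and the quantitative $\varepsilon$--$3\varepsilon$ bookkeeping you give is exactly what is needed. The only cosmetic remark is that the paper also records the easy inclusion $\overline{\tilde{\PP}^{^3}}\subseteq{\mathcal X}$ explicitly; in your write-up this is implicit in the observation that $\tilde{\PP}^{^3}\subseteq{\mathcal X}$ and ${\mathcal X}$ is closed, which is fine.
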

\begin{proof}
First, let us prove that $\overline{\tilde{\PP}^{^3}} \subseteq {\mathcal X}$ ($\overline{\tilde{\PP}^{^3}}$ denoting the closure of $\tilde{\PP}^{^3}$). \newline
Let $\varphi=(\varphi_1,\varphi_2,\varphi_3) \in \overline{\tilde{\PP}^{^3}}$ and $\{p_m\}_m$, with $p_m=(p_{m,1},p_{m,2},p_{m,3})
\in \tilde{\PP}^{^3}$, be a sequence convergent to $\varphi$. Then
\[
\lim_m p_{m,i}=\varphi_i, \quad \forall i \in\{1,2,3\}
\]
and, consequently, one has $\varphi \in {\mathcal X}$ since
\[
\varphi_1(0)=\lim_m p_{m,1}(0)=\lim_m p_{m,2}(0)=\varphi_{2}(0).
\]

Viceversa, we are going to prove that ${\mathcal X} \subseteq \overline{\tilde{\PP}^{^3}}$. Since $\PP^{^3}$ is a dense subspace of the space $C([0,1])^{^3}$, for a given  $\varphi=(\varphi_1,\varphi_2,\varphi_3) \in {\mathcal X}$ there exists a sequence $\{p_m\}_m$, with $p_m=(p_{m,1},p_{m,2},p_{m,3}) \in {\PP}^{^3}$
such that
\begin{equation} \label{limitpm1}
\lim_m p_m=\varphi,
\end{equation}
 from which it follows
\begin{equation} \label{limitpm2}
\lim_m p_{m,1}(0)=\varphi_1(0)=\varphi_{2}(0)=\lim_m p_{m,2}(0).
\end{equation}
Starting from the sequence $\{p_m\}_m$,  we introduce a new sequence $\{q_m\}_m$, with $q_m=(q_{m,1},q_{m,2},q_{m,3}) \in {\PP}^{^3}$ defined as follows
\[
q_{m,i}(x)=\left\{\begin{array}{lr}
xp_{m-1,i}(x)+(1-x)p_{m-1,i}(x)\displaystyle\frac{p_{m-1,i+1}(0)}{\displaystyle \lim_mp_{m,i}(0)}, & \quad i=1 \\
xp_{m-1,i}(x)+(1-x)p_{m-1,i}(x)\displaystyle \frac{p_{m-1,i-1}(0)}{\displaystyle  \lim_mp_{m,i}(0)}, & \quad i= 2\\
p_{m,i}(x), & \quad i =3
\end{array} \right. .
\]
Then,  taking into account (\ref{limitpm2}), one has
\[
q_{m,1}(0)=p_{m-1,1}(0)\displaystyle\frac{p_{m-1,2}(0)}{\displaystyle  \lim_mp_{m,1}(0)}=p_{m-1,2}(0)\displaystyle \frac{p_{m-1,1}(0)}{\displaystyle \lim_mp_{m,2}(0)}=q_{m,2}(0),
\]
i.e. $q_m \in \tilde{\PP}^{^3}$. Moreover, by using (\ref{limitpm1}) and (\ref{limitpm2}), again,
it can be easily proved that
\[
\lim_mq_{m,i}=\varphi_i, \quad \forall i \in \{1,2,3\},
\]
i.e. $\lim_m q_m=\varphi,$ from which the thesis follows.
\end{proof}

\noindent \emph{Proof of Theorem \ref{lemmaWm}}
We start by showing that the operators $\tilde{\mathcal W}_m$ map ${\mathcal X}$ into ${\mathcal X}$. To this aim it is sufficient to
observe that for any array of functions ${\rho}\in{\mathcal X}$ one has that $\tilde{\mathcal W}_m \rho\in C([0,1])^{^3}$ and $(\tilde{\mathcal W}_m{\rho})(0)=({\mathcal W}{\rho})(0)$. \newline
Now we are going to prove (\ref{limnormWm}). Let ${\rho}=(\rho_1,\rho_2,\rho_3)^T \in {\mathcal X}$ such that  $\|{\rho}\|_{\infty} \leq 1$.
One has
\begin{equation} \label{equality1}
\|\tilde{\mathcal W}_m {\rho}\|_{\infty}
 =  \max\left\{\sup_{s \in \left[0,\frac{c}{m^{2-2\epsilon}}\right]}\|(\tilde{\mathcal W}_m {\rho})(s)\|_{\infty}, \sup_{s \in\left[\frac{c}{m^{2-2\epsilon}},1\right]}\|(\tilde{\mathcal W}_m {\rho})(s)\|_{\infty}\right\}.
\end{equation}
Now, by  (\ref{opmodified}), we deduce
\begin{eqnarray*}
\sup_{s \in\left[\frac{c}{m^{2-2\epsilon}},1\right]}\|(\tilde{\mathcal W}_m {\rho})(s)\|_{\infty}&=&\sup_{s\in\left[\frac{c}{m^{2-2\epsilon}},1\right]}
\max\left\{\left|(-\pi+\bar{\Omega}_1(s))\rho_1(s)+(\mathcal L_m\rho_2)(s)\right|, \right.\\
& & \left.\left|(-\pi+\bar{\Omega}_2(s))\rho_2(s)+(\mathcal L_m\rho_1)(s)\right|\right\}.
\end{eqnarray*}
Being
\[
\sum_{h=0}^{m+1} \lambda_h L(x_h,s)=\int_0^1L^{i,j}(t,s)dt-e_m(L(\cdot,s)), \quad s \in [0,1],
\]
where $e_m$ denotes the error of the Lobatto quadrature formula in (\ref{lobatto}), for $s \in \left[\displaystyle \frac{c}{m^{2-2\epsilon}},1\right]$,
we can write that
\begin{eqnarray*}
\left|(-\pi+\bar{\Omega}_1(s))\rho_1(s)+(\mathcal L_m\rho_2)(s)\right|  &  &\leq  \|{\rho}\|_{\infty}\left(
\left|-\pi+\bar{\Omega}_1(s)\right|+\left|\sum_{h=0}^{m+1} \lambda_h L(x_h,s)\right|\right)\\
& & \leq    \left|-\pi+\bar{\Omega}_1(s)\right|+\left|\int_0^1L(t,s)dt\right|+\left|e_m(L(\cdot,s))\right|\\
& & \leq  \sup_{s \in[0,1]}\left(\left|-\pi+\bar{\Omega}_1(s)\right|+\int_0^1\left|L(t,s)\right|dt\right) \\
& & \hspace{1 cm}  +\sup_{s \in\left[\frac{c}{m^{2-2\epsilon}},1\right]}\left|e_m(L(\cdot,s))\right|\\
& & = |\chi|\pi+\sup_{s \in\left[\frac{c}{m^{2-2\epsilon}},1\right]}\left|e_m(L(\cdot,s))\right|
\end{eqnarray*}
and, similarly,
\[
\left|(-\pi+\bar{\Omega}_{2}(s))\rho_2(s)+(\mathcal L_m\rho_1)(s)\right|\leq |\chi|\pi+\sup_{s \in\left[\frac{c}{m^{2-2\epsilon}},1\right]}\left|e_m(L(\cdot,s))\right|.
\]
Then, in virtue of Lemma \ref{propLm}, for any $r \in \NN$, we have
\begin{equation} \label{estimate1}
\sup_{s \in\left[\frac{c}{m^{2-2\epsilon}},1\right]}\|(\tilde{\mathcal W}_m {\rho})(s)\|_{\infty}< \pi+\C \frac{r!}{m^{r\epsilon}}, \quad \C=\C(r).
\end{equation}
It remains to estimate
$\displaystyle \sup_{s \in\left[0,\frac{c}{m^{2-2\epsilon}}\right]}\|(\tilde{\mathcal W}_m {\rho})(s)\|_{\infty}$. Taking into account the definition (\ref{opmodified}) of the operator $\tilde{\mathcal W}_m$, we can write
\begin{eqnarray*}
& &\hspace*{-0.8cm}\sup_{s \in \left[0,\frac{c}{m^{2-2\epsilon}}\right]}\|(\tilde{\mathcal W}_m {\rho})(s)\|_{\infty} \\
&\leq & \frac{m^{2-2\epsilon}}{c} \hspace*{-0.1cm}\sup_{s \in\left[0,\frac{c}{m^{2-2\epsilon}}\right]} \left\{s\left\|({\mathcal W}_m {\rho})\left(\frac{c}{m^{2-2\epsilon}}\right)\right\|_{\infty}+\left(\frac{c}{m^{2-2\epsilon}}-s\right)\left\|({\mathcal W}
{\rho})(0)\right\|_{\infty}\right\}\\
&=& \max\left\{\left\|\left({\mathcal W}_m {\rho}\right)\left(\frac{c}{m^{2-2\epsilon}}\right)\right\|_{\infty},\left\|\left({\mathcal W}{\rho}\right)(0)\right\|_{\infty}\right\}\\
&\leq &\max\left\{\sup_{s \in\left[\frac{c}{m^{2-2\epsilon}},1\right]}\left\|\left({\mathcal W}_m {\rho}\right)(s)\right\|_{\infty},\left\|\left({\mathcal W}{\rho}\right)(0)\right\|_{\infty}\right\}.
\end{eqnarray*}
Now since $\left\|\left({\mathcal W}{\rho}\right)(0)\right\|_{\infty} \leq |\chi|\pi$
and (\ref{estimate1}) holds true, we can conclude that, for any $r \in \NN$,
\begin{equation} \label{estimate2}
\sup_{s \in\left[0,\frac{c}{m^{2-2\epsilon}}\right]}\|(\tilde{\mathcal W}_m {\rho})(s)\|_{\infty} < \pi+\C \frac{r!}{m^{r\epsilon}}.
\end{equation}
Finally, combining (\ref{equality1}), (\ref{estimate1}) and (\ref{estimate2}) we have that
\begin{equation}  \label{estnormWm}
\|\tilde{\mathcal W}_m \| < \pi + \C \frac{r!}{m^{r\epsilon}}, \quad r \in \NN
\end{equation}
 i.e. (\ref{limnormWm}). In order to prove (\ref{convpointnormWm}) we want to apply the Banach-Steinhaus theorem (see, for instance, \cite[p. 517]{A}). First, we recall that the subset  $\tilde{\PP}^{^3}=\PP^{^3}\cap {\mathcal X}$ is a dense subspace of ${\mathcal X}$ (see Lemma \ref{densesubspace}). Then we are going to show that
\begin{equation} \label{assertion(i)}
\displaystyle \lim_{m \to \infty} \| (\tilde{\mathcal W}_m- {\mathcal W}) {\rho}\|_\infty = 0, \quad  \forall \  \rho=(\rho_1,\rho_2, \rho_3)^T \in \tilde{\PP}^{^3},
\end{equation}
and that the operators $\tilde{\mathcal W}_m:{\mathcal X} \to {\mathcal X}$ are uniformly bounded with respect to $m$, i.e.
\begin{equation} \label{assertion(ii)}
\displaystyle \sup_m \|\tilde{\mathcal W}_m \| < \infty.
\end{equation}
Assertion (\ref{assertion(ii)})  follows from (\ref{estnormWm}).
In order to prove (\ref{assertion(i)}),  noting that
\[
\| (\tilde{\mathcal W}_m- {\mathcal W}) {\rho}\|_\infty =\max\left\{\sup_{s \in \left[0,\frac{c}{m^{2-2\epsilon}}\right]} \hspace{-0.5 cm}\|(\tilde{\mathcal W}_m- {\mathcal W}) {\rho}(s)\|_{\infty}, \sup_{s \in\left[\frac{c}{m^{2-2\epsilon}},1\right]}\hspace{-0.5 cm} \|(\tilde{\mathcal W}_m- {\mathcal W}){\rho}(s)\|_{\infty}\right\},
\]
we are going to show that both the terms into the braces converges to zero when $m \to \infty$.
For the second one it is sufficient to show that
\begin{equation} \label{Lmij-Lij}
\lim_{m \to \infty} \sup_{s \in\left[ \frac{c}{m^{2-2\epsilon}},1\right]}| ({\mathcal L}_m- {\mathcal L}) {p}(s)|= 0,  \quad \forall \ p \in \PP.
\end{equation}
Fixed $p \in \PP$, by applying the  error estimate (\ref{errorequad2}) for the Lobatto quadrature formula to the function $L(\cdot,s)p$, we have, for any $r\in \NN$
\begin{eqnarray*}
|({\mathcal L}_m-{\mathcal L}) {p}(s)| \leq \frac {\C}{m^r}\int_0^1\left|\frac{\partial^r}{\partial t^r}\left(L(t,s)p(t)\right)\right|\varphi^r(t)dt.
\end{eqnarray*}
But
\begin{eqnarray*}
 \int_0^1\left|\frac{\partial^r}{\partial t^r}\left(L(t,s)p(t)\right)\right|\varphi^r(t)dt
& = &\int_0^1\left|\sum_{k=0}^r\left(\begin{array}{c}r\\k\end{array}\right)\frac{\partial^k}{\partial t^k}L(t,s)p^{(r-k)}(t)\right|\varphi^r(t)dt\\
& \leq & \sum_{k=0}^r\left(\begin{array}{c}r\\k\end{array}\right)\int_0^1\left|\frac{\partial^k}{\partial t^k}L(t,s)\right|\varphi^k(t)\left|p^{(r-k)}(t)\right|\varphi^{r-k}(t)dt\\
& \leq & \C \sum_{k=0}^r\left(\begin{array}{c}r\\k\end{array}\right)\int_0^1\left|\frac{\partial^k}{\partial t^k}L(t,s)\right|\varphi^k(t)dt
\end{eqnarray*}
and, taking into account the estimate (\ref{estimate}), we get for $\displaystyle \frac{c}{m^{2-2\epsilon}}\leq s\leq 1$
$$\int_0^1\left|\frac{\partial^r}{\partial t^r}\left(L(t,s)p(t)\right)\right|\varphi^r(t)dt \leq  \C \sum_{k=0}^r\left(\begin{array}{c}r\\k\end{array}\right)\frac{k!}{s^{k/2}}
\leq \C 2^r r! \ m^{r(1-\epsilon)},$$
with $\C=\C(r,p)$. Hence, we deduce that
\[
|({\mathcal L}_m-{\mathcal L}) {p}(s)| \leq \frac {\C 2^r \ r! } {m^{r\epsilon}}
\]
from which (\ref{Lmij-Lij}) follows. \newline
Now let us consider $s \in \left[0,\displaystyle \frac{c}{m^{2-2\epsilon}}\right]$. By the definition (\ref{opmodified}), we have
\begin{eqnarray*}
& &\|(\tilde{\mathcal W}_m-{\mathcal W}) {\rho}(s)\|_{\infty} \\
&  & = \left\|\frac{m^{2-2\epsilon}}{c}\left[s({\mathcal W}_m{\rho})\left(\frac{c}{m^{2-2\epsilon}}\right)+\left(\frac{c}{m^{2-2\epsilon}} -s\right)({\mathcal W}{\rho})(0) \right] - ({\mathcal W} {\rho})(s)\right\|_{\infty}\\
&  & \leq  \frac{m^{2-2\epsilon}}{c} s   \left\| ({\mathcal W}_m {\rho})\left(\frac{c}{m^{2-2\epsilon}}\right)- ({\mathcal W} {\rho})\left(\frac{c}{m^{2-2\epsilon}}\right)\right\|_{\infty} \\
&  &  \hspace{0.5cm}+\frac{m^{2-2\epsilon}}{c} s  \left\| ({\mathcal W} {\rho})\left(\frac{c}{m^{2-2\epsilon}}\right)- ({\mathcal W} {\rho})(0)\right\|_{\infty} +\left\|({\mathcal W}{\rho})(0)-({\mathcal W} {\rho})(s)\right\|_{\infty}.
\end{eqnarray*}
For the first addendum we can write
\begin{eqnarray*}
& &\sup_{s \in \left[0,\displaystyle \frac{c}{m^{2-2\epsilon}}\right]}\frac{m^{2-2\epsilon}}{c} s   \left\| ({\mathcal W}_m {\rho})\left(\frac{c}{m^{2-2\epsilon}}\right)- ({\mathcal W} {\rho})\left(\frac{c}{m^{2-2\epsilon}}\right)\right\|_{\infty}\\
& &\leq \sup_{s \in\left[\frac{c}{m^{2-2\epsilon}},1\right]} \|({\mathcal W}_m-{\mathcal W}) {\rho}(s)\|_{\infty}
\end{eqnarray*}
and, then, from (\ref{Lmij-Lij}) it follows that
\begin{equation} \label{limsup2}
\lim_{m \to \infty} \sup_{s \in \left[0,\displaystyle \frac{c}{m^{2-2\epsilon}}\right]}\frac{m^{2-2\epsilon}}{c} s   \left\| ({\mathcal W}_m {\rho})\left(\frac{c}{m^{2-2\epsilon}}\right)- ({\mathcal W} {\rho})\left(\frac{c}{m^{2-2\epsilon}}\right)\right\|_{\infty}=0.
\end{equation}
For the second addendum we get
\begin{eqnarray*}
& &\lim_{m \to \infty}\hspace*{-0.2cm} \sup_{s \in \left[0,\displaystyle \frac{c}{m^{2-2\epsilon}}\right]}\frac{m^{2-2\epsilon}}{c} s  \left\| ({\mathcal W} {\rho})\left(\frac{c}{m^{2-2\epsilon}}\right)- ({\mathcal W} {\rho})(0)\right\|_{\infty}\\
& &= \lim_{m \to \infty} \left\| ({\mathcal W} {\rho})\left(\frac{c}{m^{2-2\epsilon}}\right)- ({\mathcal W} {\rho})(0)\right\|_{\infty}=0
\end{eqnarray*}
since ${\mathcal W} {\rho} \in C([0,1])^3$.
Finally, being also
\[
\lim_{m \to \infty} \sup_{s \in \left[0,\displaystyle \frac{c}{m^{2-2\epsilon}}\right]}\left\|({\mathcal W} {\rho})(0)-({\mathcal W} {\rho})(s)\right\|=0
\]
we can conclude that
\begin{equation}\label{limsup3}
\lim_{m \to \infty} \sup_{s \in \left[0,\displaystyle \frac{c}{m^{2-2\epsilon}}\right]}\|(\tilde{{\mathcal W}}_m- {\mathcal W}) {\rho}(s)\|_{\infty} =0.
\end{equation}
This completes the proof.


\begin{proof}\emph{of Theorem \ref{opercompatti}}
At first, let us note that the operators ${\mathcal S}_m$ map ${\mathcal X}$ into ${\mathcal X}$ and the set $\{{\mathcal S}_m\}_m$
is collectively compact if  the sets  of operators $\{\mathcal{K}_m^{i,j}\}_m$,  for any fixed couple of indices $(i,j)$ such that $i=j=1$, $i=j=2$, $i=3$ or $j=3$,
and $\{\mathcal{M}_m^{i,j}\}_m$, for $(i,j)=(1,2)$ and $(i,j)=(2,1)$,  are collectively compact.
 Moreover, by  definition (\ref{MatrixSm}), it results that $\forall  \rho=(\rho_1,\rho_2,\rho_3) \in {\mathcal X}$, if
\begin{equation}\label{convpunt}
\lim_{m \to \infty} \|(\mathcal{K}_m^{i,j}-\mathcal{K}^{i,j}) \rho_j\|_\infty=0,
\end{equation}
and
\begin{equation}\label{convpunt2}
\lim_{m \to \infty} \|(\mathcal{M}_m^{i,j}-\mathcal{M}^{i,j}) \rho_j\|_\infty=0,
\end{equation}
then $\displaystyle \lim_{m \to \infty} \|({\mathcal S}_m -{\mathcal S}){\rho}\|_{\infty}=0$.

Now, the limit conditions (\ref{convpunt}), (\ref{convpunt2})
can be immediately deduced taking into account (\ref{Kmij}), (\ref{Mmij}),  
the continuity of the kernels $K^{i,j}(\cdot,s)$ and $M^{i,j}(\cdot,s)$, and the convergence of the Lobatto quadrature rule on the set $C([0,1])$.
From this, by applying standard arguments, (see, for instance, \cite[Theorem 12.8]{K}) it also follows that the sets $\{\mathcal{K}_m^{i,j}\}_m$
and $\{\mathcal{M}_m^{i,j}\}_m$, with $i$ and $j$ as specified above, are collectively compact and the proof is complete.
\end{proof}

\begin{proof}\emph{of Theorem \ref{maintheorem}}
First of all we observe that by (\ref{assertion(ii)}) and (\ref{convpointnormWm}) we can deduce that the operators
 $-\pi \mathcal{I}+\tilde{\mathcal W}_m:{\mathcal X} \to :{\mathcal X}$ are bounded and pointwise convergent to $-\pi \mathcal{I}+{\mathcal W}$. \newline
Moreover, from (\ref{limnormWm}),
in virtue of the geometric series theorem, it follows  that for sufficiently large $m$ the operators $(-\pi \mathcal{I}+{\mathcal W}_m)^{-1}:{\mathcal X} \to {\mathcal X}$ exist and are uniformly bounded with
$$\|(-\pi \mathcal{I}+\tilde{\mathcal W}_m)^{-1}\| \leq \frac{1}{\pi-\displaystyle \sup_m\|\tilde{\mathcal W}_m\|}$$
(see also (\ref{assertion(ii)})).
Now taking into account Theorem \ref{opercompatti}, it results (see, for instance, Theorem 10.8 and Problem 10.3 in \cite{K}) that for
sufficiently large $m$ the operators  $$(-\pi \mathcal{I}+\tilde{\mathcal W}_m +{\mathcal S}_m)^{-1}:{\mathcal X} \to {\mathcal X}$$ exist
and are uniformly bounded, i.e. the method is stable.

From this, since
\[
\bar{{\psi}}-\bar{{\psi}}_m=(-\pi \mathcal{I}+{\tilde{\mathcal W}}_m +{{\mathcal S}}_m)^{-1}
\left[(\tilde{\mathcal W}_m-{\mathcal W})\bar{{\psi}}+({\mathcal S}_m-{\mathcal S})\bar{{\psi}}\right]
\]
we immediately deduce (\ref{errorestimate}).

Finally, in order to estimate the first term in the brackets on the right hand side of (\ref{errorestimate}),
we will consider separately the cases $s \in \left[0,\displaystyle \frac {c}{m^{2-2\epsilon}}\right]$ and
$s \in \left[\displaystyle \frac {c}{m^{2-2\epsilon}},1\right]$. \newline
For $s \in \left[0,\displaystyle \frac {c}{m^{2-2\epsilon}}\right]$,  by proceeding as in the proof of Theorem \ref{lemmaWm}, we obtain
\begin{eqnarray} \label{inequalities}
\|(\tilde{\mathcal W}_m-{\mathcal W})\bar{\psi}(s)\|_{\infty} & \leq &
\left\| ({\mathcal W}_m \bar{\psi})\left(\frac{c}{m^{2-2\epsilon}}\right)- ({\mathcal W}\bar{\psi})\left(\frac{c}{m^{2-2\epsilon}}\right)\right\|_{\infty}  \nonumber\\
& \hspace{0.5 cm} & + \left\| ({\mathcal W} \bar{\psi})\left(\frac{c}{m^{2-2\epsilon}}\right)- ({\mathcal W} \bar{\psi})(0)\right\|_{\infty} \nonumber\\
& & \hspace{0.5 cm} +\left\|({\mathcal W} \bar{\psi})(0)-({\mathcal W} \bar{\psi})(s)\right\|_{\infty}.
\end{eqnarray}
We first consider the second addendum on the right-hand side in (\ref{inequalities})
\begin{eqnarray*}
\left\| ({\mathcal W} \bar{\psi})\left(\frac{c}{m^{2-2\epsilon}}\right)- ({\mathcal W} \bar{\psi})(0)\right\|_{\infty}
 &=&\max\left\{\left|( ({\mathcal L}\bar{\psi}_2)\left(\frac{c}{m^{2-2\epsilon}}\right)+\chi\pi\bar{\psi}_1(0)\right|\right.,\\
 & &\left. \left|({\mathcal L}\bar{\psi}_1)\left(\frac{c}{m^{2-2\epsilon}}\right)+\chi\pi \bar{\psi}_2(0)\right|\right\}.
\end{eqnarray*}
Recalling that $\bar{\psi}_1(0)=\bar{\psi}_2(0)$ ($\bar{{\psi}} \in \mathcal{X}$) and using the change of variable
$t=\displaystyle \frac {c}{m^{2-2\epsilon}}\tau$ we can write
\begin{eqnarray*}
\left|(\mathcal L\bar{\psi}_2)\left(\frac{c}{m^{2-2\epsilon}}\right)+\pi \chi\bar{\psi}_1(0)\right| \\
& & \hspace*{-5cm} =\left|-\int_0^{\frac{m^{2-2\epsilon}}{c}}\frac{\sin{(\chi\pi)}}{\tau^2+2\tau\cos{(\chi\pi)}+1}\bar{\psi}_2\left(\frac{c}{m^{2-2\epsilon}}\tau\right)d\tau +\chi\pi\bar{\psi}_2(0)\right|\\
& & \hspace*{-5cm} \leq  \left|-\int_0^{\frac{m^{2-2\epsilon}}{c}}\frac{\sin{(\chi\pi)}}{\tau^2+2\tau\cos{(\chi\pi)}+1}\left[\bar{\psi}_2\left(\frac{c}{m^{2-2\epsilon}}\tau\right)-
\bar{\psi}_2(0)\right]d\tau\right|\\
& & \hspace*{-5cm} \hspace{0.5cm}+ \left|-\int_0^{\frac{m^{2-2\epsilon}}{c}}\frac{\sin{(\chi\pi)}}{\tau^2+2\tau\cos{(\chi\pi)}+1}d\tau+\pi \chi \right| \cdot |\bar{\psi}_2(0)|
=:A_1+A_2.
\end{eqnarray*}
Now, taking into account the behavior of the solution $\bar{\psi}$ (see (\ref{behavioursolution}))
around the point $s=0$  and setting $\beta=\displaystyle\frac{1}{1+|\chi|}$, we have
\begin{eqnarray*}
A_1 & \leq & \C \int_0^{\frac{m^{2-2\epsilon}}{c}}\frac{\sin{(|\chi|\pi)}}{\tau^2+2\tau\cos{(\chi\pi)}+1}\left|\frac{c}{m^{2-2\epsilon}}\tau\right|^{\beta}d\tau\\
& \leq & \C \left(\frac{c}{m^{2-2\epsilon}}\right)^{\beta}\int_0^{\infty}\frac{\sin{(|\chi|\pi)}\tau^{\beta}}{\tau^2+2\tau\cos{(\chi\pi)}+1}d\tau\\
& \leq & \C \left(\frac{c}{m^{2-2\epsilon}}\right)^{\beta}
\end{eqnarray*}
and
\begin{eqnarray*}
A_2 & = & \left|-\arctan{\left(\frac{\displaystyle \frac{c}{m^{2-2\epsilon}}+\cos{(\chi\pi)}}{\sin{(\chi\pi)}}\right)}+\frac{\pi}{2}\right|\cdot |\bar{\psi}_2(0)|\\
& = & O\left(\frac{1}{m^{2-2\epsilon}}\right), \quad \mathrm{as} \ m \to \infty.
\end{eqnarray*}
Since the exponent $\beta$ satisfies $\displaystyle \frac 1 2 < \beta < 1$, we can conclude that
\begin{equation}
\left|(\mathcal L\bar{\psi}_2)\left(\frac{c}{m^{2-2\epsilon}}\right)+\pi \chi\bar{\psi}_1(0)\right| \leq \C \left(\frac{1}{m^{2-2\epsilon}}\right)^{\beta}.
\end{equation}
Following the same arguments, it can be proved that
\begin{equation}
\left|(\mathcal L\bar{\psi}_1)\left(\frac{c}{m^{2-2\epsilon}}\right)+\pi \chi\bar{\psi}_2(0)\right| \leq \C \left(\frac{1}{m^{2-2\epsilon}}\right)^{\beta}.
\end{equation}
Therefore
\begin{equation} \label{est1}
\left\| ({\mathcal W} \bar{\psi})\left(\frac{c}{m^{2-2\epsilon}}\right)- ({\mathcal W}\bar{\psi})(0)\right\|_{\infty}
\leq \C \left(\frac{1}{m^{2-2\epsilon}}\right)^{\beta}.
\end{equation}
In order to estimate the third term on the right-hand side of (\ref{inequalities}) one can proceed analogously to the proof of estimate (\ref{est1})
 and get
\begin{equation} \label{est2}
\left\|({\mathcal W} \bar{\psi})(0)-({\mathcal W}\bar{\psi})(s)\right\|_{\infty} \leq \C \left(\frac{1}{m^{2-2\epsilon}}\right)^{\beta}.
\end{equation}
It remains to estimate the first addendum in (\ref{inequalities}). Let us consider now $s \in \left[\displaystyle \frac {c}{m^{2-2\epsilon}},1\right]$.
In this case, by the definition we can write
\begin{equation}
\|(\tilde{\mathcal W}_m- {\mathcal W}) \bar{\psi}(s)\|_{\infty}
= \max\left\{\left|(\mathcal L_m-\mathcal L) \bar{\psi}_2(s)\right|, \left|(\mathcal L_m-\mathcal L) \bar{\psi}_1(s)\right|\right\}.
\end{equation}
Applying the error estimate (\ref{errorequadest}) for the Lobatto quadrature formula, we get
\begin{eqnarray*}
\left|(\mathcal L_m-\mathcal L) \bar{\psi}_2(s)\right|&\leq &\frac{\C}{m}E_{2m}\left(\left(L(\cdot,s) \bar{\psi}_2\right)'\right)_{\varphi,1}\\
&\leq  &   \frac{\C}{m}\left[E_{2m}\left(\left(L(\cdot,s)\right)' \bar{\psi}_2\right)_{\varphi,1}
+ E_{2m}\left(L(\cdot,s) \bar{\psi}_2'\right)_{\varphi,1}  \right]=:B+C.
\end{eqnarray*}
On the other hand since, for  $f \in L^2$ and $g \in L^2_{\varphi}$, the following inequality
\begin{equation}
E_{2m}(fg)_{\varphi,1} \leq 2\|f\|_2E_m(g)_{\varphi,2}+E_m(f)_2 \|g\varphi\|_2
\end{equation}
holds true, the quantities $B$ and $C$ can be estimated as follows
\begin{eqnarray*}
B & \leq & \frac{\C}{m}\left[2\|\bar{\psi}_2\|_2E_m\left(\frac{\partial}{\partial t}L(\cdot,s)\right)_{\varphi,2}+E_m(\bar{\psi}_2)_2
\left\|\frac{\partial}{\partial t}L(\cdot,s)\varphi\right\|_2\right]\\
& =: & B_1+B_2,
\end{eqnarray*}
\begin{eqnarray*}
C & \leq & \frac{\C}{m}\left[2\|L(\cdot,s)\|_2E_m\left(\bar{\psi}_2'\right)_{\varphi,2}+E_m\left(L(\cdot,s)\right)_2\|\bar{\psi}_2'\varphi\|_2\right]\\
& =: & C_1+C_2,
\end{eqnarray*}
respectively.
Taking into account the smoothness results for the solution $\bar{\psi}$
(see (\ref{behavioursolution}), (\ref{behavioursolution2})), using the Favard inequality (\ref{Favard}) and the inequality (\ref{ineqderivL}), for any $r \in \NN$, we have
\begin{eqnarray*}
B_1 & \leq & \frac{\C}{m^r} \left\|\frac{\partial^r}{\partial t^r}L(\cdot,s)\varphi^r\right\|_{2}
\leq\frac{\C}{m^r} r!\left(\int_0^1\frac{(t+s)^{2r+2}t^r}{(t^2+2ts\cos{\chi\pi}+s^2)^{2r+2}}dt\right)^{\frac{1}{2}}\\
& = & \frac{\C}{m^r} \frac{r!}{s^{\frac{r}{2}+\frac{1}{2}}}\left(\int_0^{\frac{1}{s}}\frac{(1+x)^{2r+2}x^r}{(x^2+2x\cos{\chi\pi}+1)^{2r+2}}dx\right)^{\frac{1}{2}}
 \leq  \frac{\C}{m^r} \frac{r!}{s^{\frac{r}{2}+\frac{1}{2}}}
\end{eqnarray*}
and
\begin{eqnarray*}
B_2 & \leq & \frac{\C}{m^3} \frac{1}{s} E_{m-2}(\bar{\psi}_2'')_{\varphi^2,2}
\leq\frac{\C}{m^3} \frac{1}{s} \left\|\bar{\psi}_2''\varphi^2 \right\|_2
 \leq  \frac{\C}{m^3} \frac{1}{s}.
\end{eqnarray*}
By similar arguments we obtain
\begin{eqnarray*}
C_1 & \leq & \frac{\C}{m^2}\left(\int_0^1\left|\frac{s\sin^2{\chi\pi}}{t^2+2ts\cos{\chi\pi}+s^2)}\right|^2dt\right)^{\frac{1}{2}}E_{m-1}(\bar{\psi}_2'')_{\varphi^2,2}\\
& \leq & \frac{\C}{m^2}\frac{1}{s^{\frac{1}{2}}}\left(\int_0^{\frac{1}{s}} \frac{\sin^2{|\chi|\pi}}{(x^2+2x\cos{\chi\pi}+1)^2)}dx\right)^{\frac{1}{2}}\|\bar{\psi}_2''\varphi^2 \|_2
 \leq  \frac{\C}{m^2}\frac{1}{s^{\frac{1}{2}}}
\end{eqnarray*}
and, for any $r \in \NN$,
\begin{eqnarray*}
C_2 & \leq & \frac{\C}{m^{r+1}}E_{m-r}\left(L(\cdot,s)\right)_{\varphi^r,2}
 \leq \frac{\C}{m^{r+1}}\left\|\frac{\partial}{\partial t^r}L(\cdot,s)\varphi^r\right\|_{2}
 \leq  \frac{\C}{m^{r+1}}\frac{r!}{s^{\frac{r}{2}+\frac{1}{2}}}.
\end{eqnarray*}
Then we can deduce the pointwise estimate
\[
\left|(\mathcal L_m-\mathcal L)\bar{\psi}_2(s) \right| \leq \frac{\C}{m^2}\frac{1}{s^{\frac{1}{2}}}, \quad s \in \left[\displaystyle \frac {c}{m^{2-2\epsilon}},1\right]
\]
The same conclusion can be drawn for $\left|(\mathcal L_m-\mathcal L)\bar{\psi}_1(s)\right|$ and hence
we get
\[
\|(\tilde{\mathcal W}_m- {\mathcal W}) \bar{\psi}(s)\|_{\infty}\leq \frac{\C}{m^2}\frac{1}{s^{\frac{1}{2}}}, \quad  s \in \left[\frac {c}{m^{2-2\epsilon}},1\right].
\]
In particular when $s=\displaystyle \frac {c}{m^{2-2\epsilon}}$, one has
\[
\left\|\left(\tilde{\mathcal W}_m- {\mathcal W}\right)\bar{\psi}\left(\displaystyle \frac {c}{m^{2-2\epsilon}}\right)\right\|_{\infty}
\leq \frac{\C}{m^{1+\epsilon}}.
\]
Summing up, we can write
\[
\|(\tilde{\mathcal W}_m-{\mathcal W})\bar{\psi}(s)\|_{\infty} \leq \left\{\begin{array}{lr}
\C \max\left\{\left(\displaystyle \frac{1}{m^{2-2\epsilon}}\right)^{\beta},\displaystyle \frac{1}{m^{1+\epsilon}}\right\}, & s  \in \left[0,\displaystyle \frac {c}{m^{2-2\epsilon}}\right]\\
\displaystyle \frac{\C}{m^2}\frac{1}{s^{\frac{1}{2}}}, & s  \in \left[\displaystyle \frac {c}{m^{2-2\epsilon}},1\right]
\end{array} \right.,
\]
i.e. (\ref{errorestimate1}).
\end{proof}

\begin{proof} \emph{of Theorem \ref{theoremcondition}}
We first prove the inequality
\begin{equation} \label{ineqcond1}
\left\| A_m\right\| \leq \left\|-\pi \mathcal{I}+\tilde{\mathcal W}_m+{\mathcal S}_m\right\|.
\end{equation}
Then, take a vector
$\mathbf{a}=\left(a_{1,0},\ldots,a_{1,m+1},a_{2,0},\ldots,a_{2,m+1},a_{3,0},\ldots,a_{3,m+1}\right)^T \in \tilde{\RR}^{3(m+2)}$,
$\left\|\mathbf{a} \right\|_{\infty} \neq 0$.
For this $\mathbf{a}$, let $f=\left(f_1,f_2,f_{3}\right)^T \in \mathcal X$ such that
\[f_i(x_l)=a_{i,l}, \quad \forall i \in \{1,2,3\}, \quad \forall l \in \{0,1,\ldots,m+1\},\]
and  $\|f_i\|_{\infty}=\left\|\left(a_{i,0},\ldots,a_{i,m+1}\right)^T\right\|_{\infty}$ from which
\begin{equation} \label{normbarf}
\|f\|_{\infty}=\left\|\mathbf{a} \right\|_{\infty}.
\end{equation}
Then, taking into account (\ref{normbarf}), we can write
\begin{eqnarray*}
\left\|A_m \mathbf{a} \right\|_{\infty} & = & \max_{0 \leq l \leq m+1}\left\|(-\pi \mathcal{I}+\tilde{\mathcal W}_m+{\mathcal S}_m) f(x_l)\right\|_{\infty} \\
& \leq & \sup_{0 \leq s \leq 1} \left\|(-\pi \mathcal{I}+\tilde{\mathcal W}_m+{\mathcal S}_m)f(s)\right\|_{\infty}\\
& = & \left\|(-\pi \mathcal{I}+\tilde{\mathcal W}_m+{\mathcal S}_m )f\right\|_{\infty} \\
& \leq & \left\|-\pi \mathcal{I}+\tilde{\mathcal W}_m+{\mathcal S}_m\right\| \left\|\underline a \right\|_{\infty}
\end{eqnarray*}
from which (\ref{ineqcond1}) immediately follows. Now, in order to prove the second inequality
\begin{equation} \label{ineqcond2}
\left\| A_m^{-1}\right\| \leq \left\|(-\pi \mathcal{I}+\tilde{\mathcal W}_m+{\mathcal S}_m)^{-1}\right\|,
\end{equation}
let us to consider a vector $\mathbf{b} \in \tilde{\RR}^{3(m+2)}$ such that $\left\|\mathbf{b} \right\|_{\infty} \neq 0$ and
$\mathbf{a}=A_m^{-1} \mathbf{b}$. Pick a function $g \in \mathcal X$  with $\left\|g\right\|_{\infty}=\left\|\mathbf{b} \right\|_{\infty}$.
In correspondence of $g$ let $\varphi=\left(\varphi_1,\varphi_2,\varphi_3\right)^T$  be the array of functions defined
as $\varphi=(-\pi \mathcal{I}+\tilde{\mathcal W}_m+{\mathcal S}_m)^{-1}g$. Then (see (\ref{equivalsyst})) one has that
\[\varphi_{i}(x_l)=a_{i,l}, \quad \forall i \in \{1,2,3\}, \quad \forall l \in \{0,1,\ldots,m+1\},\]
and, hence, $\left\|\mathbf{a}\right\|_{\infty} \leq \left\|\varphi\right\|_{\infty}$.
It follows that
\begin{eqnarray*}
\left\|A_m^{-1}\mathbf{b}\right\|_{\infty} \leq \left\|\varphi\right\|_{\infty} & = &\left\|(-\pi \mathcal{I}+\tilde{\mathcal W}_m+{\mathcal S}_m)^{-1}g\right\|_{\infty}\\
& \leq & \left\|(-\pi \mathcal{I}+\tilde{\mathcal W}_m+{\mathcal S}_m)^{-1}\right\|_{\infty} \left\|\mathbf{b}\right\|_{\infty},
\end{eqnarray*}
i.e. (\ref{ineqcond2}) holds true.
Finally, combining (\ref{ineqcond1}) and (\ref{ineqcond2}) the thesis follows.
\end{proof}

\begin{proof}\emph{of Theorem \ref{harmonicerror}}
By (\ref{double2}) and (\ref{doubleappr}), for any fixed point $(x,y) \in D$,  we obtain
\begin{equation} \label{erroru}
\hspace*{0.8cm} |u(x,y)-u_m(x,y)| \leq \sum_{i=1}^{3}\left|\int_0^1 H_i(x,y,t) \bar{\psi_i}(t) dt-\sum_{h=0}^{m+1} \lambda_h H_i(x,y,x_h) \bar{\psi}_{m,i}(x_h)\right|.
\end{equation}
Then for each $i \in \{1,2,3\}$, let us estimate the $i$-th term of the previous sum as follows
\begin{eqnarray*}
\left|\int_0^1 H_i(x,y,t) \bar{\psi_i}(t) dt-\sum_{h=0}^{m+1} \lambda_h H_i(x,y,x_h) \bar{\psi}_{m,i}(x_h)\right|   \\
& & \hspace*{-7cm} \leq \left|\int_0^1 H_i(x,y,t) \bar{\psi_i}(t) dt-\sum_{h=0}^{m+1} \lambda_h H_i(x,y,x_h) \bar{\psi}_{i}(x_h)\right| \\
& & \hspace*{-7cm} +\left|\sum_{h=0}^{m+1} \lambda_h H_i(x,y,x_h) \bar{\psi}_{i}(x_h)-\sum_{h=0}^{m+1} \lambda_h H_i(x,y,x_h) \bar{\psi}_{m,i}(x_h)\right|\\
& & \hspace*{-7cm}=:A_i+B_i.
\end{eqnarray*}
By applying the error estimate (\ref{errorequad2}) for the Lobatto quadrature formula, we get
\begin{eqnarray*}
A_i & \leq & \frac {\C}{m}E_{2m}\left([H_i(x,y,\cdot)\bar{\psi}_{i}]^{'}\right)_{\varphi,1} \leq \frac {\C}{m} \left\|[H_i(x,y,\cdot)\bar{\psi}_{i}]^{'}\varphi \right\|_1\\
& \leq & \frac {\C_i}{m}\left(\frac{1}{d_i^2}+\frac{1}{d_i}\right)
 \leq  \frac {\C_i}{m}\left(\frac{1}{d^2}+\frac{1}{d}\right)
\end{eqnarray*}
where $d_i=\displaystyle \min_{0 \leq t \leq 1}|(x,y)-(\xi_i(t),\eta_i(t))|$,  $d=\displaystyle \min_{i=1,2,3}d_i$ and $\C_i \neq \C_i(x,y)$.
Now, for the quantity $B_i$ we can write
\begin{eqnarray*}
B_i & \leq & \sum_{h=0}^{m+1} \lambda_h \left|H_i(x,y,x_h)\right| \left|\bar{\psi}_{i}(x_h)-\bar{\psi}_{m,i}(x_h)\right| \\ & \leq & \left\|\bar{\psi}_{i}-
\bar{\psi}_{m,i}\right\|_{\infty}\left\|H_i(x,y,\cdot)\right\|_{\infty}\sum_{h=0}^{m+1} \lambda_h \leq \frac{\C_i'}{d}\left\|\bar{\psi}-\bar{\psi}_m\right\|_{\infty}
\end{eqnarray*}
with the constant $\C_i'$ independent of $(x,y)$.
Hence, combining (\ref{erroru}) with the previous estimates for $A_i$ and $B_i$ we can deduce the thesis.
\end{proof}

\section{Numerical examples} \label{Section5}
In this section we consider some examples of the interior Dirichlet problem defined on planar domains with a  corner and solve them
by means of the numerical method proposed in Section \ref{Section3}.
In order to give the boundary condition $g$, we choose  a test harmonic function $u$. After solving the linear system (\ref{linearsystem}),
we compute the approximate array $\bar{\psi}_m$, solution of (\ref{sistapprox}), and the function $u_m$, defined in (\ref{doubleappr}),
which approximates the double layer potential $u$. \newline
In the following tables we perform a discrete version of $\left\|\bar{\psi}_m  \right\|_{\infty}$, (reporting only the digits which are correct
according to the value obtained for $m=2048$), the absolute error
 $\varepsilon_m(x,y)=|u(x,y)-u_m(x,y))|$ in different points $(x,y) \in D$
and the condition numbers in infinity norm of the matrix $A_m$ of the system (\ref{linearsystem}).\\ \\
{\bf Example 1.} 
Consider the Dirichlet problem (\ref{Dirichlet}) on a domain having a reentrant corner $P_0=(0,0)$ with interior angle
$\phi=\frac 32 \pi$ and a contour  $\Sigma$ given by the following parametric representation
\[\sigma(t)=\left(\frac 23 \sin{(3 \pi t)}, \sin{(2 \pi t)}\right), \quad  t \in [0,1],\]
(see Figure \ref{boundary1}).
\begin{figure}[t]
\centering
\includegraphics[width=0.65\textwidth]{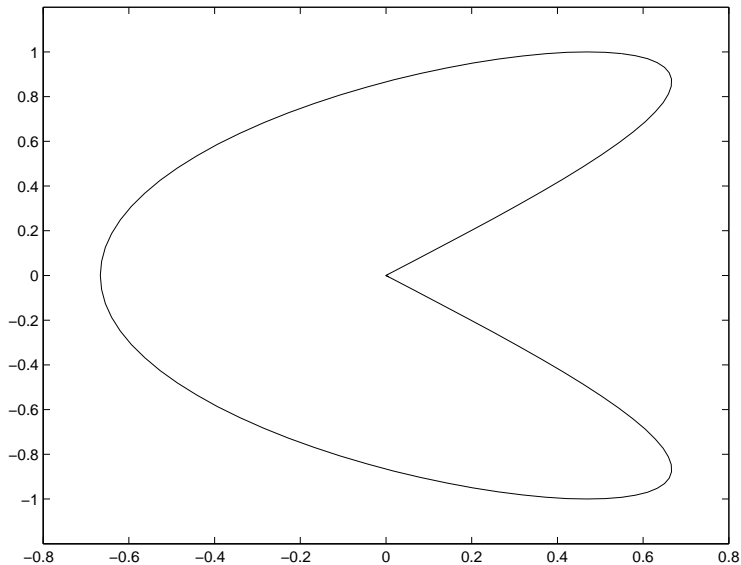}
\caption{The contour $\Sigma$ in Example 1}
\label{boundary1}
\end{figure}
Moreover, we assume that the solution of (\ref{Dirichlet}) is
the harmonic function
\[u(x,y)=r^{\frac 23} \cos{\frac{2}{3}\theta}, \]
in polar coordinates $r,\theta$, to give a realistic behaviour of $u$ at the corner (see \cite{CostSte,Gris}).
Then the boundary datum $g$  is given by setting $g=u$ on $\Sigma$.\newline
By applying our numerical procedure, we have chosen the length of the two sections $\Sigma_1$ and $\Sigma_2$ intersecting
at the corner point such that $\delta=5.16e-08$ and the parameters involved in the definition (\ref{opmodified}) of the modified operator
$\tilde{\mathcal W}_m$ given by $c=50$, $\epsilon=10^{-3}$.

Tables \ref{table1} and \ref{table2} give the numerical results. They show that the linear system we solve is well conditioned for each sufficiently large
value of $m$, the sequence of the approximating arrays $\bar{\psi}_m$ converges and, also, that the error in the approximation of the double layer potential
becomes smaller and smaller as well as we move away from the boundary.

\begin{table}[htb]
\centering
\caption{Condition numbers and  norm of $\bar{\psi}_m$}
{
\begin{tabular}{lll}
\hline\noalign{\smallskip}
$m$  & \quad $\mathrm{cond}(A_m)$ & \quad $\|\bar{\psi}_m\|_{\infty}$ \\
\noalign{\smallskip}\hline\noalign{\smallskip}
64  &  \quad 16.92 &   \quad 2.33052e-01\\
128 &  \quad 16.93 &   \quad 2.33052e-01\\
256 &  \quad 16.93 &   \quad 2.330523e-01 \\
512 &  \quad 16.93 &   \quad 2.330523e-01 \\
\noalign{\smallskip}\hline
\end{tabular}
}
\label{table1}
\end{table}

\begin{table}[htb]
\centering
\caption{Errors $\varepsilon_m(x,y)$}
{
\begin{tabular}{lllll}
\hline\noalign{\smallskip}
$m$   & $\varepsilon_m(-0.01,0)$ & $\varepsilon_m(0,0.1)$ & $\varepsilon_m(-0.4,0.4)$ &$\varepsilon_m(0.4,0.8)$ \\
\noalign{\smallskip}\hline\noalign{\smallskip}
64  &    7.61e-05  &  4.74e-06  &  8.53e-04 &   7.54e-06   \\
128 &    7.02e-06  &  9.41e-07  &  1.46e-05 &   1.12e-08   \\
256 &    1.38e-06  &  1.96e-07  &  3.43e-08 &   2.34e-09   \\
512 &    7.21e-08  &  4.77e-09  &  1.87e-09 &   1.03e-10   \\
\noalign{\smallskip}\hline
\end{tabular}
}
\label{table2}
\end{table}

{\bf Example 2.} 
Consider the Dirichlet problem (\ref{Dirichlet}) on a drop-shaped domain having a corner point $P_0=(0,0)$ with interior angle $\phi=\frac{2}{3} \pi$ whose contour $\Sigma$
is represented by the following parametrization
\[\sigma(t)=\left(\frac{2}{\sqrt{3}} \sin{\pi t} , -\sin{2 \pi t}\right) \quad t \in [0,1], \]
(see Figure \ref{boundary2}).
\begin{figure}[t]
\centering
\includegraphics[width=0.65\textwidth]{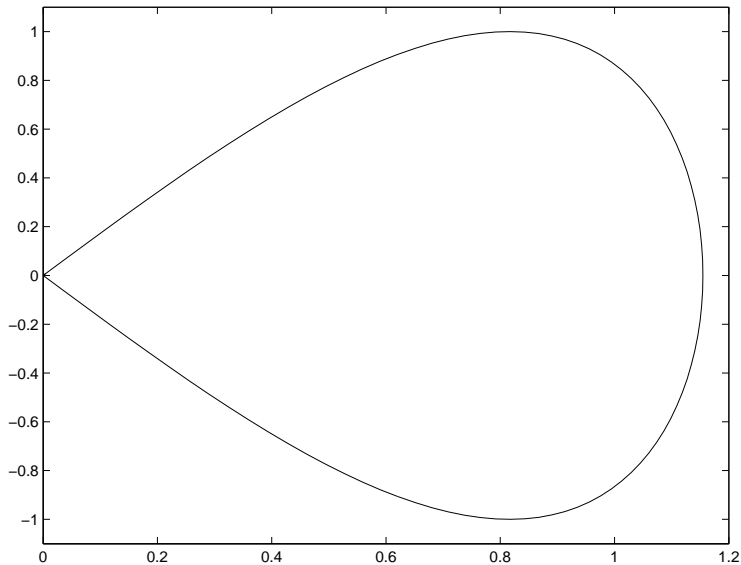}
\caption{The contour $\Sigma$ in Example 2}
\label{boundary2}
\end{figure}
 The boundary data $g$ is given through the harmonic function
\[u(x,y)=r^{\frac 32} \cos{\frac{3}{2}\theta},\]
in polar coordinates $r,\theta$, chosen because of its realistic behavior near the corner.
In this case we have chosen $\delta=3.10e-08$ and the parameters in (\ref{opmodified})
as follows: $c=1$, $\epsilon=10^{-6}$.
In Table \ref{table3} and Table \ref{table4} we have reported the numerical results. Let us observe that one can repeat word by word the
comments made in the previous example.

\begin{table}
\centering
\caption{Condition numbers and  norm of $\bar{\psi}_m$}
{
\begin{tabular}{lll}
\hline\noalign{\smallskip}
$m$  & $\mathrm{cond}(A_m)$ & $\|\bar{\psi}_m\|_{\infty}$\\
\noalign{\smallskip}\hline\noalign{\smallskip}
64  &   4.80  &  4.4387 e-01      \\       
128 &   4.20  &  4.438746e-01    \\        
256 &   4.18  &  4.43874669e-01  \\        
512 &   4.18  &  4.438746696045e-01   \\       
\noalign{\smallskip}\hline
\end{tabular}
}
\label{table3}
\end{table}

\begin{table}[htb]
\centering
\caption{Errors $\varepsilon_m(x,y)$}
{
\begin{tabular}{lllll}
\hline\noalign{\smallskip}
$m$  & $\varepsilon_m(0.01,0)$ & $\varepsilon_m(0.1,0)$ & $\varepsilon_m(0.8,0.6)$ &$\varepsilon_m(0.9,0.8)$\\
\noalign{\smallskip}\hline\noalign{\smallskip}
64  & 8.78e-03 &  6.59e-05 &  6.84e-07 &  4.78e-05 \\
128 & 6.66e-05 &  1.06e-06 &  8.47e-09 &  1.59e-09 \\
256 & 5.24e-08 &  1.72e-09 &  1.37e-11 &  3.08e-12 \\
512 & 5.84e-11 &  1.84e-12 &  1.25e-14 &  8.77e-15 \\
\noalign{\smallskip}\hline
\end{tabular}
}
\label{table4}
\end{table}

{\bf Example 3.} 
We test our method for the domain whose boundary $\Sigma$ admits the follo\-wing parametric representation:
\[\sigma(t)=\sin{\pi t}\left(\cos{((1-\chi)\pi t)},\sin{((1-\chi)\pi t)} \right) \quad t \in [0,1], \quad \chi=0.86 \]
with a single corner at $P_0=(0,0)$ (see Figure \ref{boundary3}). The interior angle at $P_0$ is $\phi=(1-\chi)\pi$.
\begin{figure}[t]
\centering
\includegraphics[width=0.65\textwidth]{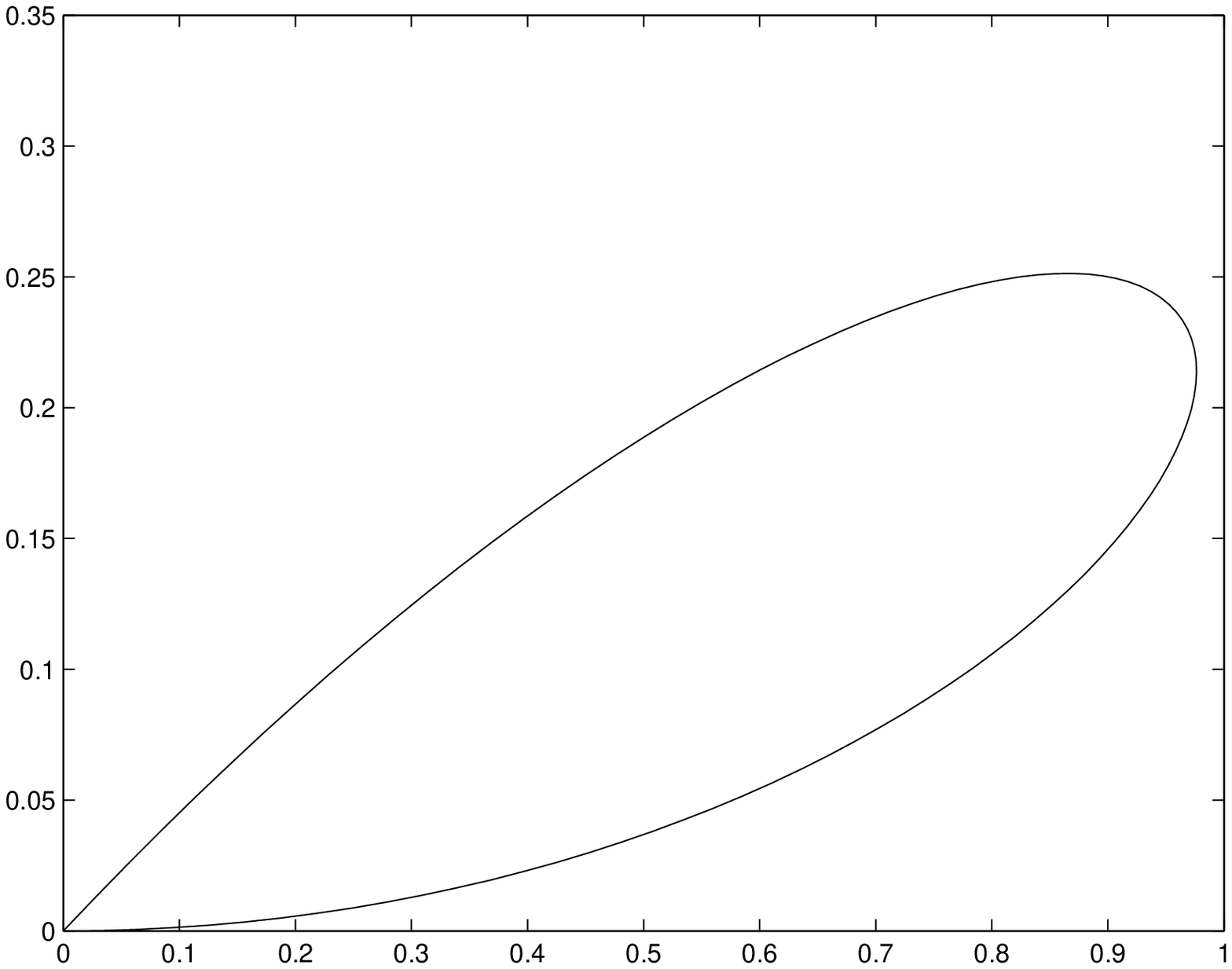}
\caption{The contour $\Sigma$ in Example 3}
\label{boundary3}
\end{figure}
Here we have chosen as exact solution the harmonic function
$$u(x,y)=\sin{x}\cosh{y}$$
and $\delta=1.52e-08$, $c=500$ and $\epsilon=10^{-1}$.
The numerical results are shown in tables \ref{table5} and \ref{table6}.
\begin{table}[htb]
\centering
\caption{Condition numbers and  norm of $\bar{\psi}_m$}
{
\begin{tabular}{lll}
\hline\noalign{\smallskip}
$m$  & $\mathrm{cond}(A_m)$ & $\|\bar{\psi}_m\|_{\infty}$ \\
\noalign{\smallskip}\hline\noalign{\smallskip}
64  &  49.38  &  1.4456e-01  \\
128 &  58.86  &  1.4456e-01 \\ 
256 &  15.05  &  1.44568e-01\\
512 &  14.06  &  1.44568490e-01 \\ 
1024&  14.12  &  1.445684902e-001 \\ 
\noalign{\smallskip}\hline
\end{tabular}
}
\label{table5}
\end{table}

\begin{table}
\centering
\caption{Errors $\varepsilon_m(x,y)$}
{
\begin{tabular}{lllll}
\hline\noalign{\smallskip}
$m$  & $\varepsilon_m(0.05,0.01)$ & $\varepsilon_m(0.2,0.025)$ & $\varepsilon_m(0.4,0.05)$ &$\varepsilon_m(0.8,0.15)$ \\
\noalign{\smallskip}\hline\noalign{\smallskip}
64 &   4.46e-04 &    7.32e-03 &   3.12e-03 &    4.27e-04 \\
128 &  1.45e-05 &    2.70e-04 &  3.44e-04  &  1.61e-06\\
256 &  9.62e-08 &   2.76e-07 &   9.19e-08  &  1.16e-12\\
512 &  9.04e-14 &   1.34e-13   &   2.44e-13  &  1.99e-15 \\
1024&  0 &   6.66e-16  &  2.83e-15 &    3.66e-15 \\
\noalign{\smallskip}\hline
\end{tabular}
}
\label{table6}
\end{table}

{\bf Example 4.} 
In this example, in order to focus our attention on the behavior of the condition number $\mathrm{cond}(A_m)$
when the interior angle varies,  we consider a family of domains bounded  by the curves
\[\sigma(t,\phi)=\sin{\pi t}\left(\cos{\phi\left(t-\frac 1 2\right)},\sin{\phi\left(t-\frac 1 2\right)} \right) \quad t \in [0,1], \]
with a corner at $P_0=(0,0)$ and interior angles $\phi \in [0.1\pi,1.9\pi]$.
Figure \ref{plotcond} shows, for some fixed (and sufficiently large) values of $m$, the plot of $\mathrm{cond}(A_m)$ as a function of the interior angle $\phi$. The graphs were obtained in correspondence of the following choice of the parameters involved in the numerical procedure: $c=200$,
$\epsilon=10^{-1}$. They confirm our theoretical expectations. In fact we can note that, for a fixed $m$, the condition numbers of the matrix $A_m$ are
small for each value of $\phi$. On the other hand, they put in evidence that
the sequence $\left\{\mathrm{cond}(A_m)\right\}_{m \geq m_0}$ is uniformly bounded with respect to $m$,
according with estimate (\ref{conditionnumber}).

\begin{figure}[t]
\centering
\includegraphics[width=0.65\textwidth]{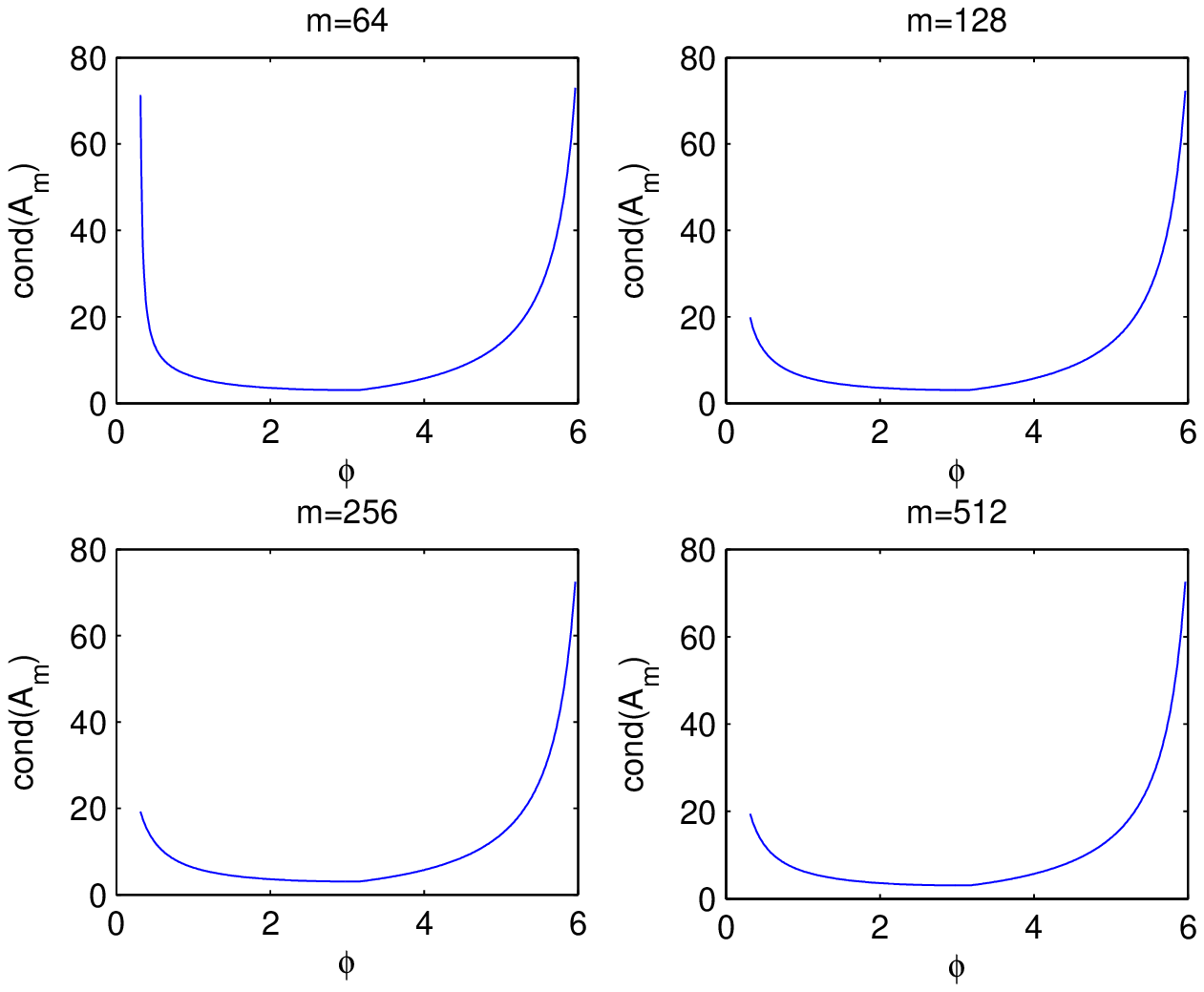}
\caption{Condition numbers for Example 4}
\label{plotcond}
\end{figure}

{\bf Example 5.} 
We can repeat word by word the remarks of the previous example when we consider the family of
 ``heart-shaped" domains bounded by the curves
$$\sigma(t)=\left(\begin{array}{c}\cos{(1+\frac{\phi}{\pi})\pi t}-\sin{(1+\frac{\phi}{\pi})\pi t}\\
\sin{(1+\frac{\phi}{\pi})\pi t}+\cos{(1+\frac{\phi}{\pi})\pi t}\end{array}\right)\left(\begin{array}{c}\tan{\frac{\phi}{2}}\\ 1\end{array}\right)
-\left(\begin{array}{c} \tan{\frac{\phi}{2}}\\ \cos{\pi t}\end{array}\right), \quad t \in [0,1], $$
with $\phi \in (\pi,2\pi)$ the interior angle of the single outward-pointing corner $P_0=(0,0)$. The behavior of the condition numbers $\mathrm{cond}(A_m)$ is illustrated by  Figure \ref{plotcond2}.
\begin{figure}[t] \label{plotcond2}
\centering
\includegraphics[width=0.65\textwidth]{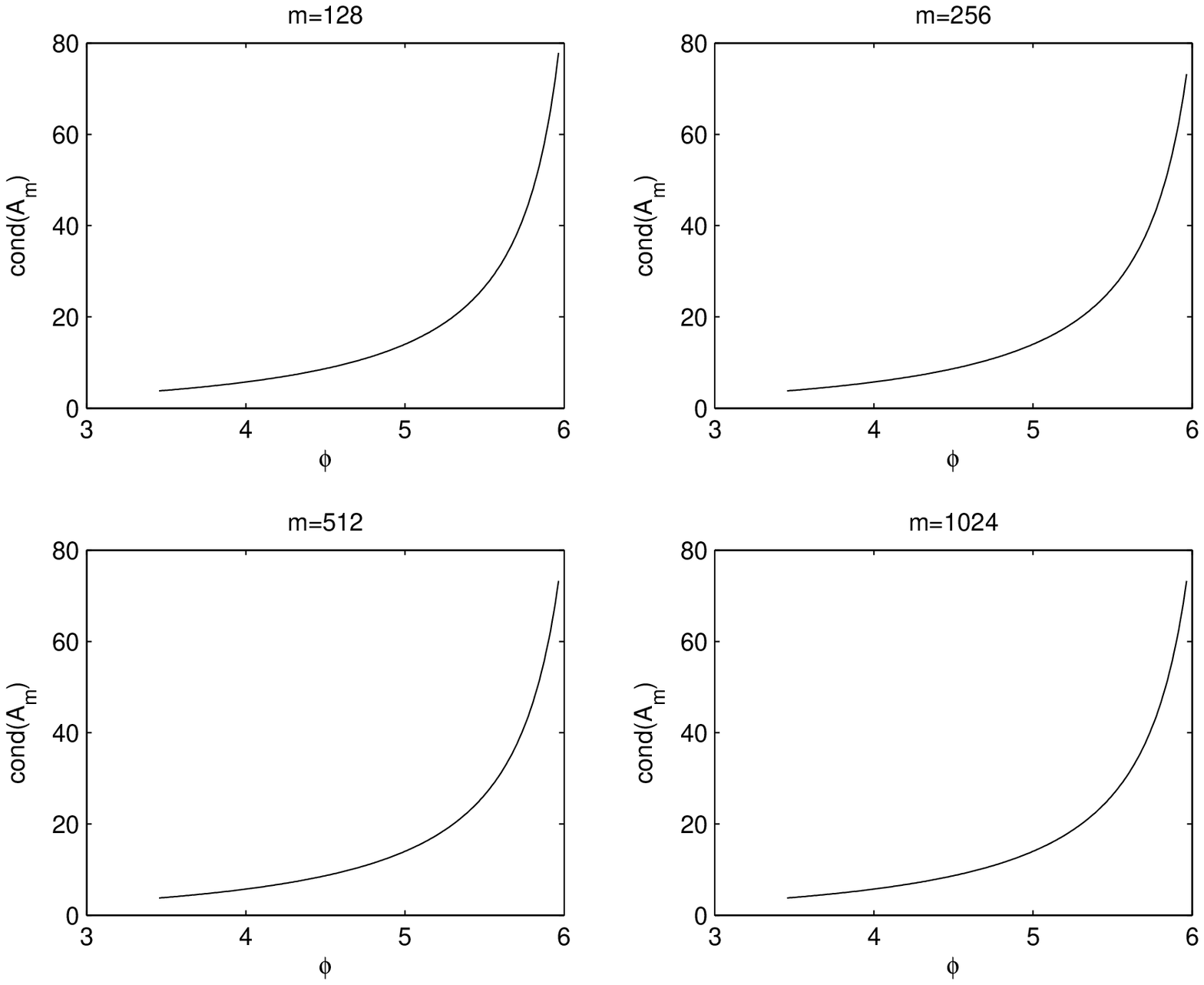}
\caption{Condition numbers for Example 5}
\label{plotcond2}
\end{figure}

\section*{Acknowledgments}
C. Laurita is partly supported by GNCS Project 2013 ``Metodi fast per la risoluzione numerica di sistemi di equazioni integro-differenziali''.

\bibliographystyle{plain}
\bibliography{refs}
\end{document}